\documentclass[10pt]{amsart}  
\usepackage{amsmath,amssymb,amsfonts,hyperref}

\title{Lecture notes: Semidefinite programs and harmonic analysis}

\author{Frank Vallentin} 

\address{F.~Vallentin, Centrum voor Wiskunde en Informatica (CWI),
Kruislaan 413, 1098 SJ Amsterdam, The Netherlands}

\email{f.vallentin@cwi.nl}

\thanks{The author was partially supported by
  the Deutsche Forschungsgemeinschaft (DFG) under grant SCHU
  1503/4.}

\subjclass{43A35, 52C17, 90C22, 94B65} 

\keywords{semidefinite programming, harmonic analysis, symmetry
  reduction, block diagonalization, theta function, positive kernels,
  Peter-Weyl theorem, Bochner theorem, boolean harmonics, spherical
  harmonics}

\date{September 11, 2008}

\newcommand{\defi}[1]{{\em #1}}

\newcommand{\R}{\mathbb{R}}

\newcommand{\Z}{\mathbb{Z}}
\newcommand{\C}{\mathbb{C}}

\newcommand{\MC}{\mathcal{C}}

\newtheorem{defin}{Definition}[section]
\newtheorem{definition}[defin]{Definition}
\newtheorem{proposition}[defin]{Proposition}
\newtheorem{theorem}[defin]{Theorem}

\newtheorem{lemma}[defin]{Lemma}
\newtheorem{example}[defin]{Example}
\newtheorem{exercise}[defin]{Exercise}

\DeclareMathOperator{\SO}{SO}

\DeclareMathOperator{\Aut}{Aut}
\DeclareMathOperator{\trace}{trace}
\DeclareMathOperator{\lin}{lin}

\newcommand{\Perm}{\operatorname{S}}
\newcommand{\Sn}{S^{n-1}} 
\newcommand{\Snm}{S^{n-2}} 
\newcommand{\Pol}{\operatorname{Pol}}

\newcommand{\On}{{\operatorname{O}(\R^n)}}
\newcommand{\Otwo}{{\operatorname{O}(\R^2)}}
\newcommand{\Orth}{\operatorname{O}}

\newcommand{\diag}{\operatorname{diag}}

\newcommand{\Harm}{\operatorname{Harm}}

\newcommand{\Stab}{\operatorname{Stab}}

\newcommand{\prodeucl}[2]{#1 \cdot #2}
\newcommand{\prodhaar}[2]{(#1,#2)}
\newcommand{\prodtrace}[2]{\langle #1, #2 \rangle}

\begin{document}

\begin{abstract}
  Lecture notes for the tutorial at the workshop \textsl{HPOPT 2008
    --- 10th International Workshop on High Performance Optimization
    Techniques (Algebraic Structure in Semidefinite Programming)}, June
  11th to 13th, 2008, Tilburg University, The Netherlands.
\end{abstract}

\maketitle

\tableofcontents

\section{Introduction}
\label{sec:introduction}

Semidefinite programming is a vast extension of linear programming and
has a wide range of applications: combinatorial optimization and
control theory are the most famous ones. Although semidefinite
programming has an enormous expressive power in formulating convex
optimization problems it has a few practical drawbacks: Robust and
efficient solvers, unlike their counterparts for solving linear
programs, are currently not available. So it is crucial to exploit the
problems' structure to be able to perform computations.

\medskip

In the last years many results in the area of semidefinite programming
were obtained for problems which have symmetry. The underlying
principle which was used here is the following: One simplifies the
original semidefinite program which is invariant under a group action
by applying an algebra isomorphism that maps a ``large'' matrix
algebra to a ``small'' matrix algebra. Then it is sufficient to solve
the semidefinite program using the smaller matrices.

\medskip

The aim of the tutorial is to give a general and explicit procedure to
simplify semidefinite programs which are invariant under the action of
a group.

\medskip

A \textit{(complex) semidefinite program} is an optimization problem
of the form
\begin{equation}
\label{sdp standard}
\max\{\prodtrace{C}{K} : \text{$\prodtrace{A_i}{K} = b_i$, $i =
1, \ldots, n$, and $K \succeq 0$}\},
\end{equation}
where $A_i \in \C^{V \times V}$, and $C \in \C^{V \times V}$ are
Hermitian matrices whose rows and columns are indexed by a finite set
$V$, $(b_1, \ldots, b_n)^t \in \R^n$ is a given vector and $K \in
\C^{V \times V}$ is a variable Hermitian matrix and where ``$K \succeq
0$'' means that $K$ is positive semidefinite. Here $\prodtrace{C}{K} =
\trace(CK)$ denotes the trace inner product between matrices. In the
following we denote the matrix entry $(x,y)$ of $K$ by $K(x,y)$
instead of the more familiar notion $K_{xy}$. This notation will make
our treatment of infinite matrices more natural.

\medskip

The punch line of the lecture is the following: Suppose that the
semidefinite program \eqref{sdp standard} is invariant under a group
$\Gamma$ of permutations of $V$: If $K$ is feasible for \eqref{sdp
  standard} then also $K_u$, defined by $K_u(x,y) = K(ux,uy)$, is
feasible for all $u \in \Gamma$, and the objective values of $K$ and
$K_u$ coincide. It is crucial to observe that to solve \eqref{sdp
  standard} it suffices to consider only those $K$ which satisfy
$K(x,y) = K(ux,uy)$ for all $u \in \Gamma$ because one can
\defi{symmetrize} every optimal solution $K$ to obtain a
$\Gamma$-invariant one:
\begin{equation*}
  \frac{1}{|\Gamma|}\sum_{u \in \Gamma} K(ux,uy).
\end{equation*}

\medskip

Now one can apply Bochner's characterization of $\Gamma$-invariant
positive semidefinite matrices. This is a classical result in harmonic
analysis. It says that one can represent any $\Gamma$-invariant $K$ by
a block-diagonal matrix $\diag(F_1, \ldots, F_l)$. One has the
representation
\begin{equation*}
K(x,y) = \sum_{k=1}^l \langle F_k, Z_k(x,y) \rangle,
\end{equation*}
where $F_k$ are positive semidefinite matrices of size $m_k \times
m_k$ and $Z_k(x,y) \in \C^{m_k \times m_k}$ are fixed basis matrices
(At this point the notation $Z_k(x,y)$ might be confusing, it
specifies a matrix and not a matrix entry, so it should be better
something like $Z_k^{(x,y)}$ but this looks awkward.). Here the
parameters $l$ and $m_1, \ldots, m_l$ as well as the basis matrices
$Z_k$ depend on the group $\Gamma$. The original semidefinite
program~\eqref{sdp standard} simplifies to
\begin{equation*}
\begin{split}
\max\Big\{\left\langle C, \sum_{k=1}^l \langle F_k, Z_k\rangle \right\rangle \;\; : \;\; & \left\langle A_i, \sum_{k=1}^l \langle F_k, Z_k\rangle \right\rangle = b_i,\; i = 1, \ldots, n,\\
& F_1, \ldots, F_l \succeq 0 \Big\},
\end{split}
\end{equation*}
and in some cases this simplification results into a huge saving.  The
advantage is that instead of dealing with matrices of size $|V| \times
|V|$ one has to deal with block diagonal matrices with $l$ block
matrices of size $m_1, \ldots, m_l$, respectively. In many
applications the sum $m_1 + \cdots + m_l$ is much smaller than $|V|$
and in particular many practical solvers take advantage of the block
structure to speed up the numerical calculations.

\medskip

In this lecture we develop the theory of ``block-diagonalization'' of
semidefinite programs starting from basic principles. We illustrate
this symmetry reduction for one specific semidefinite program: The
theta function for distance graphs on compact metric spaces. This is a
generalization of the Lov\'asz theta function for finite graphs. To
focus on this example has the following benefits: It shows the
geometric core of the process of symmetry reduction. In this context
it is natural to connect semidefinite programs with harmonic analysis
and the theory of group representation. In particular this provides
the possibility to consider infinite-dimensional semidefinite programs
which in many cases is a very convenient framework having potential
for future research.

\medskip

Then, this lecture gives the background for the recent development of
semidefinite programming bounds for combinatorial and geometric
packing problems initiated by Schrijver~\cite{Schrijver2}, and further
developed by Laurent~\cite{Laurent1}, Gijswijt, Schrijver,
Tanaka~\cite{GijswijtSchrijverTanaka}, Gijswijt~\cite{Gijswijt},
Bachoc, Vallentin~\cite{BachocVallentin1}, \cite{BachocVallentin2},
\cite{BachocVallentin3}, \cite{BachocVallentin4},
Musin~\cite{Musin2}. It also gives the background the developments
dealing with the more complicated case of noncompact metric spaces,
see Cohn, Elkies~\cite{CohnElkies}, de Oliveira Filho,
Vallentin~\cite{OliveiraVallentin}

\medskip

We want to stress (once more): 

\medskip

\begin{center}
\framebox{
\begin{minipage}{10cm}
  The techniques we present here apply to general semidefinite
  programs which are invariant under a symmetry group.
\end{minipage}
}
\end{center}

\bigskip

Then, one important remark: \textsl{For comments and suggestions concerning
these notes the author would be very grateful.}

\section{The theta function for distance graphs}
\label{sec:theta}

In this section we study the theta function of distance graphs in
compact metric spaces. These graphs can have infinitely many
vertices. The theta function is a generalization of the Lov\'asz theta
function for finite graphs, which originally was introduced by
Lov\'asz in the celebrated paper \cite{Lovasz1}. The Lov\'asz theta
function gives an upper bound for the stability number which one can
efficiently compute using semidefinite programming. The generalization
was studied by Bachoc, Nebe, de Oliveira Filho,
Vallentin~\cite{BachocNebeOliveiraVallentin}. The generalization also
gives an upper bound for the stability number which one can compute
using semidefinite programming. The main difference to the Lov\'asz
theta function is that one has to solve an infinite-dimensional
semidefinite program if the graph has infinitely many
vertices. However, we show that if the distance graph is symmetric,
solving this infinite-dimensional semidefinite program is feasible.

\medskip

Outline of this section:

\medskip

In Section \ref{ssec:distancegraphs} we provide the necessary
definitions from graph theory (stability number and distance graphs).
We show that finite graphs are distance graphs and we present the two
main examples: distance graphs on the Hamming cube (related to
error-correcting codes) and distance graphs on the unit sphere
(related to discrete geometry).

In Section~\ref{ssec:originalformulation} we discuss two possible
formulations of the Lov\'asz theta function.

Before we can formulate the generalization of the theta function in
Section~\ref{ssec:generalization} we review an infinite-dimensional
generalization of positive semidefinite matrices, so-called positive
Hilbert-Schmidt kernels, in Section~\ref{ssec:kernels}. There we in
particular focus on the spectral decomposition theorem which turns out
to be central in the following.

In Section~\ref{ssec:symmetry} we show how to exploit the symmetry of
distance graphs in order to simplify the computation of the theta
function. For this we introduce the automorphism group of a distance
graph and explain the essential tool of group invariant integration.

\subsection{Distance graphs}
\label{ssec:distancegraphs}

We start with some basic definitions from graph theory.  Let $G = (V,
E)$ be an \defi{undirected graph} given by a finite set $V$ of
\defi{vertices} and a subset $E \subseteq \binom{V}{2}$ of two-element
subsets of $V$ called \defi{edges}. Two vertices $x,y$ with $\{x,y\}
\in E$ are called \defi{adjacent} and a family of vertices
$(x_1,\ldots, x_n)$ in which every two consequent elements are
connected is called a \defi{path of length $n-1$}.

A \defi{stable set} (sometimes also called \defi{independent set}) of
a graph $G$ is a finite subset of the vertex-set in which no two
vertices are adjacent. The \defi{stability number} (or
\defi{independence number}) of a graph is the maximum cardinality of a
stable set of $G$:
\begin{equation*}
  \alpha(G) = \max\{|C| : C \subseteq V,\; 
  \text{$\{x,y\} \not\in E$ for all $x, y \in C$} \}.
\end{equation*}

If the graph has infinitely many vertices it may happen that there is
no maximum: The graph with vertex-set $V = S^{n-1} = \{x \in \R^n : x
\cdot x = 1\}$, in which two points are adjacent whenever they are
orthogonal, has stable sets of arbitrary cardinality. In this case, it
makes sense to replace the maximal cardinality of a finite stable set
by the maximal measure of a measurable stable set. Then in case of the
circle $S^1$ the stability number equals $\pi$. This approach has been
worked out by Bachoc, Nebe, de Oliveira Filho,
Vallentin~\cite{BachocNebeOliveiraVallentin}, but here we will stick
to the case when $\alpha(G)$ is finite.

\medskip

For the definition of a distance graph we use the triple
$(V,d,\mu)$. Here $V$ is a metric space with the distance function $d
: V \times V \to \R$ which is equipped with the finite, regular Borel
measure $\mu$. We assume that $V$ is separable and compact. Usually it
does not cause confusion if one uses only $V$ to specify the triple
$(V,d,\mu)$ and we refer to $V$ simply as a \defi{compact metric
  space}. The adjacency relation of a distance graph only depends on
the distance map $d$:

\begin{definition}
  Let $V$ be a compact metric space and let $I$ be a subset of the
  distances which may occur among distinct points in $V$. The
  \defi{distance graph} $G(V,I)$ is the graph with vertex-set $V$ and
  in which two vertices are adjacent if their distance lies in $I$. So
  the edge-set is $E = \big\{\{x,y\} : d(x,y) \in I\big\}$.
\end{definition}

The first example shows that finite graphs are distance graphs.

\begin{example} 
\label{ex:shortestpath}
Let $G = (V, E)$ be a finite graph.  By $d : V \times V \to \Z_{\geq
  0} \cup \{\infty\}$ we denote the length of a shortest path
connecting two vertices $x$ and $y$ in $G$ where we set $d(x, y) =
\infty$ whenever there is no connection at all. If the graph is
connected, that is if $d$ does not attain the value $\infty$, then $d$
defines a metric on $V$. The set $V$ comes with the uniform measure
$\mu$; we have $\mu(A) = |A|$ for any $A \subseteq V$.  The graph $G$
is a distance graph. Two vertices are adjacent whenever their distance
is equal to one. Hence, the adjacency relation only depends on the
distance map.
\end{example}

The second example comes from engineering, error correcting codes. In
fact, it can be viewed as a special case of the first example. Error
correcting codes have great practical importance for communication
across noisy channels and for storing and retrieving information on
media. The idea is that the sender adds redundant data to its messages
which allows the receiver to detect and correct errors under the
assumption that there is not unreasonable amount of noise. In
particular there is no need to ask the sender for additional data. The
book MacWilliams, Sloane~\cite{MacWilliamsSloane} is the
definitive reference in algebraic coding theory.

\begin{example}
\label{ex:hamming}
  Let $\{0,1\}^n$ be the set of binary strings of length $n$. It
  is also called the $n$-dimensional \defi{Hamming cube}. The Hamming
  cube is a metric space. The distance between two binary strings is
  measured by the \defi{Hamming distance} $\delta$, which is the
  number of entries in which they differ:
\begin{equation*}
\delta((x_1,\ldots,x_n),(y_1,\ldots,y_n)) = |\{i \in \{1,\ldots, n\} : x_i \neq y_i\}|.
\end{equation*}
The \defi{Hamming norm} (or \defi{Hamming weight}) of a vector $x \in
\{0,1\}^n$ is $\|x\| = \delta(x,0)$.
\end{example}

A central parameter is $A(n,d)$, the maximal cardinality of a subset
$C \subseteq \{0,1\}^n$ such that every two points in it have Hamming
distance at least $d$. In other words, $A(n,d)$ is the stability
number of the distance graph $G(\{0,1\}^n,\{1,\ldots,d-1\})$. A stable
set in this graph can correct $\lfloor \frac{d-1}{2} \rfloor$
errors. This value has been determined for various parameters of $n$
and $d$, but in general it is unknown to large
extend. Brouwer~\cite{Brouwer} maintains a list of lower and upper
bounds for $A(n,d)$ for $n \leq 28$.

\medskip

The third example comes from discrete geometry.

\begin{example}
\label{ex:sphere}
We consider the infinite graph $G(S^{n-1},(0,\theta))$ whose
vertex-set consists of all the points on the $(n-1)$-dimensional unit
sphere $S^{n-1} = \{x \in \R^n : x \cdot x = 1\}$. The distance
between two points on the unit sphere can be measured by the spherical
distance $d$. For $x,y \in S^{n-1}$ we have $d(x,y) = \arccos(x \cdot
y)$, where $x \cdot y$ denotes the Euclidean inner product. E.g.\
antipodal points have spherical distance $d(x,-x) = \pi$. The unit
sphere comes with a measure, the surface area $\omega$ which is
induced by the Lesbegue measure on $\R^n$. We have $\omega(S^{n-1})
= \frac{2\pi^{n/2}}{\Gamma(n/2)}$.
\end{example}

In the distance graph $G(S^{n-1},(0,\theta))$ each vertex $x$ is
adjacent to a spherical cap centered at $x$.  Stable sets of
$G(S^{n-1},(0,\theta))$ are called \defi{spherical codes with minimal
  angular distance $\theta$}. They are of special interest in
information theory. The stability number of this graph is also denoted
by $A(n,\theta)$.

The kissing number problem is equivalent to the problem of finding
$A(n,\pi/3)$. In geometry, the \defi{kissing number problem} asks for
the maximum number $\tau_n$ of unit spheres that can simultaneously
touch the unit sphere in $n$-dimensional Euclidean space without
pairwise overlapping. The touching points form a spherical code with
minimal angular distance $\pi/3$.  The value of $\tau_n$ is only known
for $n=1,2,3,4,8,24$. While its determination for $n=1,2$ is trivial,
it is not the case for other values of $n$.  The case $n=3$ was the
object of a famous discussion between Isaac Newton and David Gregory
in 1694. For a historical perspective of this discussion we refer to
Casselman~\cite{Casselman}. The first valid proof of the fact
``$\tau_3=12$'', like in the icosahedron configuration, was only given
in 1953 by Sch\"utte and van der
Waerden~\cite{SchuetteVanDerWaerden}. Odlyzko and
Sloane~\cite{OdlyzkoSloane}, and independently
Levenshtein~\cite{Levenshtein}, proved $\tau_8=240$ and
$\tau_{24}=196560$ which are respectively the number of shortest
vectors in the root lattice $E_8$ and in the Leech lattice.  In 2003,
Musin~\cite{Musin} succeeded to prove the conjectured value
$\tau_4=24$, which is the number of shortest vectors in the root
lattice $D_4$. See also the survey Pfender,
Ziegler~\cite{PfenderZiegler}. The known lower and upper bounds for
$\tau_n$, with $n \leq 10$, are given in Bachoc,
Vallentin~\cite{BachocVallentin1}.

\subsection{Original formulations for theta}
\label{ssec:originalformulation}

In the original paper \cite{Lovasz1}, the Lov\'asz theta
function was given as the solution of a semidefinite program which
involves a positive semidefinite matrix $K \in \R^{V \times V}$ whose
rows and columns are indexed by the finite vertex-set of a graph $G =
(V, E)$. In \cite[Theorem 3, Theorem 4]{Lovasz1} Lov\'asz gave the
following two formulations of the theta function, which are equivalent
by semidefinite programming duality:
\begin{equation}
\label{eq:origthetaprimal}
\begin{split}
\vartheta(G) =  \max\big\{\sum_{x \in V} \sum_{y \in V} K(x,y) \;\; : \;\;& \text{$K \in \R^{V \times V}$ is positive semidefinite},\\[-0.25cm]
& \text{$\sum_{x \in V} K(x,x) = 1$},\\
& \text{$K(x,y) = 0$ if $\{x,y\} \in E$}\big\},
\end{split}
\end{equation}
and
\begin{equation}
\label{eq:origthetadual}
\begin{split}
\vartheta(G) = \min\big\{ \lambda \;\; : \;\; & \text{$K \in \R^{V
\times V}$ is positive semidefinite},\\ 
& \text{$K(x,x) = \lambda - 1$ for all $x \in V$,}\\ 
& \text{$K(x,y) = -1$ if $\{x,y\} \not\in E$}\big\}.
\end{split}
\end{equation}

There are many equivalent definitions of the theta function. Possible
alternatives are reviewed by Knuth~\cite{Knuth}. One can rephrase
\eqref{eq:origthetadual} by saying that $\lambda$ is the minimum of
the largest eigenvalue of any symmetric matrix $K \in \R^{V \times V}$
such that $K(x,y) = 1$ whenever $x = y$ or if $x$ and $y$ are not
adjacent.

\begin{exercise}
  Prove this statement.
\end{exercise}

It is easy to see that the theta function gives an upper bound for the
stability number of a finite graph: Let $C \subseteq V$ be a stable
set of maximal cardinality. Consider the column vector $1_C \in \R^V$
which is the characteristic vector of $C$ defined by $1_C(x) = 1$ if
$x \in C$ and $1_C(x) = 0$ otherwise. Then, the rank-$1$ matrix $K =
1/|C| 1_C (1_C)^t$, which componentwise is $K(x,y) = 1/|C| 1_C(x) 1_C(y)$, is
feasible for \eqref{eq:origthetaprimal}. Thus, $\vartheta(G) \geq
\alpha(G)$.

One can strengthen the equalities in \eqref{eq:origthetaprimal} and
\eqref{eq:origthetadual} by the inequalities $K(x,y) \geq 0$ and
$K(x,y) \leq -1$ respectively.  This strengthening was introduced by
Schrijver~\cite{Schrijver1}. Sometimes it is denoted by
$\vartheta'(G)$ and we have $\vartheta(G) \geq \vartheta'(G) \geq
\alpha(G)$.

Using semidefinite programming one can compute the theta function in
polynomial time, in the sense that one can approximate it with any
given precision. In general the upper bound given by the theta
function is weak: Feige~\cite{Feige} proved that there exists a
constant $c$ and an infinite family of graphs on $n$ vertices for
which $\vartheta(G)/\alpha(G) > n/2^{c\sqrt{\log n}}$ holds. So in
particular it does not give a $n^{1-\varepsilon}$-approximation for
any fixed $\varepsilon > 0$. It is weak for a good reason:
H\aa{}stad~\cite{Hastad} showed that for any fixed $\varepsilon > 0$
one cannot approximate the stability number of a general graph with
$n$ vertices within a factor of $n^{1 - \varepsilon}$ in polynomial
time unless any problem in $\mathrm{NP}$ can be solved in expected,
probabilistic polynomial time. However, as we will see later, it is
sometimes surprisingly good, especially for symmetric graphs.

\subsection{Positive Hilbert-Schmidt kernels}
\label{ssec:kernels}

To be able to define the theta function for infinite graphs we need a
concept of positive semidefinite matrices with infintely many rows and
columns. We use positive Hilbert-Schmidt kernels for this, which are
well-studied objects in functional analysis. Many familiar facts about
positive semidefinite matrices can be generalized to positive
Hilbert-Schmidt kernels. Historically, the central results of the
theory of Hilbert-Schmidt kernels have been developed at the beginning
of the 20th century mainly by Fredholm, Hilbert, Mercer, Schmidt. The
classical text books Courant, Hilbert~\cite{CourantHilbert} and Riesz,
Sz.-Nagy~\cite{RieszNagy} are beautiful expositions. A modern
treatment is contained for example in the comprehensive books by Reed,
Simon~\cite{ReedSimon}.

\medskip

Let us list some basic properties of positive Hilbert-Schmidt
kernels. Here it is sometimes enlightening to compare positive
Hilbert-Schmidt kernels with the more familiar positive semidefinite
matrices.

\medskip

By $\MC(V)$ we denote the set of complex-valued continuous functions
$f : V \to \C$ which is an inner product space by
\begin{equation}
\label{eq:innerproduct}
(f,g) = \int_V f(x) \overline{g(x)} d\mu(x),
\end{equation}
and a normed space by $\|f\| = \sqrt{(f,f)}$.  We say that a family of
continuous functions $e_1, e_2, \ldots \in \MC(V)$ is an
\defi{orthonormal system} if
\begin{equation*}
\text{$(e_k,e_k) = 1$ and $(e_k, e_l) = 0$, whenever $k \neq l$.}
\end{equation*}
We say that it is \defi{complete} if every continuous function can be
approximated arbitrarily well by finite linear combinations in terms
of convergence in the mean, i.e.\ convergence with respect to the norm
$\|\cdot\|$ introduced above. Let $e_1, e_2, \ldots, e_d$ be an
orthonormal system, then we have the fundamental \defi{Bessel's
  inequality}
\begin{equation*}
0 \leq \|f - \sum_{k=1}^d (f,e_k) e_k\|^2 = \|f\|^2 - \sum_{k=1}^d |(f,e_k)|^2.
\end{equation*}

By $\MC(V \times V)$ we denote the set of continuous functions $K : V
\times V \to \C$.  The elements of $\MC(V \times V)$ are traditionally
called \defi{kernels} because they appear as integral kernels in the
theory of integral equations: For instance the \defi{homogeneous
  Fredholm integral equation of the second type} is an integral
equation of the form
\begin{equation}
\label{eq:fredholm}
\lambda f(x) - \int_V K(x,y) f(y) d\mu(y) = 0,
\end{equation}
where $K \in \MC(V \times V)$ is a given integral kernel, and $\lambda
\in \C$ and $f \in \MC(V)$ are to be determined. In fact, solutions to
\eqref{eq:fredholm} are given by \defi{eigenvalues} and
\defi{eigenfunctions} of the linear map
\begin{equation*}
T_K : \MC(V) \to \MC(V), \quad
T_K(f)(x) = \int_V K(x,y) f(y) d\mu(y),
\end{equation*}
i.e.\ nontrivial $f \in \MC(V)$ and $\lambda \in \C$ with $T_K(f) =
\lambda f$.

\medskip

In the following we only consider \defi{symmetric} or \defi{Hermitian}
kernels. They satisfy $K(x,y) = \overline{K(y,x)}$ for all $x, y \in
V$.  A kernel $K \in \MC(V \times V)$ is called \defi{positive} if for
any nonnegative integer $m$, any points $x_1, \ldots, x_m \in V$, and
any complex numbers $u_1, \ldots, u_m$, we have
\begin{equation*}
\sum_{i = 1}^m \sum_{j = 1}^m K(x_i,x_j)
u_i \overline{u_j} \geq 0.
\end{equation*}
So any finite matrix $(K(x_i,x_j))_{i,j}$ which one extracts from $K$
has to be positive semidefinite. We refer to positive, symmetric,
continuous kernels as \defi{positive Hilbert-Schmidt kernels} or
simply as \defi{positive kernels}.

\medskip

Let us turn to the spectral decomposition of positive Hilbert-Schmidt
kernels $K \in \MC(V \times V)$ where we first recall what happens for
positive semidefinite matrices $A \in \C^{n \times n}$ (or in a more
snobbish notation $A \in \MC(\{1,\ldots, n\} \times \{1,\ldots,
n\})$).

\medskip

Positive semidefinite matrices $A \in \C^{n \times n}$ can be
diagonalized: There is an orthonormal basis $e_1, \ldots, e_n$ of
$\C^n$ consisting of eigenvectors of $A$, all eigenvalues $\lambda_1,
\ldots, \lambda_n$ are nonnegative real numbers, and the $(x,y)$
entry of $A$ can be written as
\begin{equation*}
A(x,y) = \sum_{k = 1}^n \lambda_k e_k(x) \overline{e_k(y)}.
\end{equation*}
The \defi{rank} of $A$ is the number of nonzero eigenvalues, counted
with multiplicity.  Suppose that the eigenvalues are ordered in
descending order
\begin{equation*}
\lambda_1 \geq \lambda_2 \geq \ldots \geq \lambda_n \geq 0,
\end{equation*}
then they can be determined from the following max-min characterization:
\begin{equation*}
  \lambda_k = \max_{S_k} \min_{f \in S_k, (f,f) = 1}  (Af,f),
\end{equation*}
where $S_k$ runs through all $k$-dimensional subspaces of
$\C^n$. Geometrically, the eigenvectors form the principal axes of the
ellipsoid $\{f \in \C^n : (Af,f) = 1\}$ defined by $A$. The square
roots of the reciprocals of the corresponding eigenvalues give the
length of these axes.

\medskip

Essentially the same statements hold true for positive Hilbert-Schmidt
kernels. The main difference between positive semidefinite matrices
and positive kernels is that the latter have infinitely many,
different eigenvalues.

\medskip

Let $K \in \MC(V \times V)$ be a positive Hilbert-Schmidt kernel. To
get the flavour of the reasoning one needs when working with
Hilbert-Schmidt kernels we show using Bessel's inequality that the
multiplicity of every nonzero eigenvalue is finite: Let $e_1, \ldots,
e_d$ be an orthonormal system of eigenfunctions of a nonzero
eigenvalue $\lambda$. For fixed $x \in V$ we apply Bessel's inequality
to the function $y \mapsto K(x,y)$ and get
\begin{equation*}
\int_V |K(x,y)|^2 d\mu(y) \geq \sum_{k=1}^d \left|\int_V K(x,y) e_k(y) d\mu(y) \right|^2 = \sum_{k=1}^d |\lambda e_k(x)|^2.
\end{equation*}
Integrating both sides again yields an upper bound for $d$:
\begin{equation*}
\int_V\int_V |K(x,y)|^2 d\mu(y) d\mu(x) \geq |\lambda|^2 d.
\end{equation*}

Furthermore, one can show that the series of squared eigenvalues
converges to $0$. So there are two possibilities: $K$ has finitely
many or infinitely many nonzero eigenvalues. In the latter case, the
only accumulation point of the eigenvalues is $0$. The \defi{rank} of
a kernel is the number of nonzero eigenvalues counted with
multiplicity.

We have the \defi{spectral decomposition} of a positive
Hilbert-Schmidt kernel:
\begin{equation*}
K(x,y) = \sum_{k} \lambda_k e_k(x)\overline{e_k(y)}.
\end{equation*}
Here $\lambda_k$ are the eigenvalues of $K$, which are all
nonnegative, and the $e_k$ form a complete orthonormal system. In the
case of infinitely many different eigenvalues the right hand side
converges absolutely and uniformly to the left hand side. In the case
of finitely many different eigenvalues, the finite rank case, the
series degenerates to a finite sum. In the literature this statement
about the spectral decomposition of positive Hilbert-Schmidt kernels
is often called \defi{Mercer's theorem}.

The max-min characterization of the eigenvalues of $K$ reads as
follows: Suppose that the eigenvalues of $K$ are ordered in descending
order
\begin{equation*}
\lambda_1 \geq \lambda_2 \geq \ldots \geq 0,
\end{equation*}
then they can be determined from the following max-min
characterization:
\begin{equation*}
\lambda_k = \max_{S_k} \min_{f \in S_k, (f,f) = 1}  (T_K(f),f),
\end{equation*}
where $S_k$ runs through all $k$-dimensional subspace of $\MC(V)$. In
fact the max-min characterization of the eigenvalues is used in
Courant, Hilbert~\cite{CourantHilbert} and Riesz,
Sz.-Nagy~\cite{RieszNagy} to prove the spectral decomposition theorem.

\begin{exercise}
  Determine the eigenvalues of the inner product kernel $K : S^{n-1}
  \times S^{n-1} \to \C$ defined by $K(x,y) = x \cdot y$. Give the
  spectral decomposition of it in the case $n = 2$. (Maybe you want to
  come back to this exercise after Section~\ref{sec:spherical}.)
\end{exercise}

\subsection{The theta function for infinite graphs}
\label{ssec:generalization}

Now we are ready to generalize the theta function to possibly infinite
distance graphs $G(V,I)$.  It will turn out that for the combinatorial
and geometric packing problems we consider it is convenient to work
with the second formulation \eqref{eq:origthetadual}; possibilities to
work with the first formulation are considered in Bachoc, Nebe, de
Oliveira Filho, Vallentin~\cite{BachocNebeOliveiraVallentin}.  In the
second formulation we simply replace the condition that $K$ is a
positive semidefinite matrix by the condition that $K$ is a positive
kernel.

\begin{equation}
\label{eq:generalization}
\begin{split}
\vartheta(G(V,I)) = \min\big\{ \lambda \;\; : \;\; & \text{$K \in \MC(V \times V)$ is positive},\\ 
& \text{$K(x,x) = \lambda - 1$ for all $x \in V$,}\\ 
& \text{$K(x,y) = -1$ if $\{x,y\} \not\in E$\}}\big\}.
\end{split}
\end{equation}

When we are working with an infinite-dimensional semidefinite program
we can no longer use the simple duality argument given in
Section~\ref{ssec:originalformulation} to show that the theta function
gives an upper bound for the stability number. So we have to provide a
new (simple) one: Instead of constructing a feasible kernel from a
stable set and going to the dual semidefinite program, we show that
any feasible kernel can be used as a test function on any stable set
$C$ to bound the cardinality of $C$.

\begin{theorem}
\label{thm:upperbound}
Let $G(V,I)$ be a distance graph. Then, the theta function is an upper
bound for the stability number: $\vartheta(G(V,I)) \geq \alpha(G(V,I))$.
\end{theorem}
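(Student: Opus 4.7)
The plan is to use a feasible kernel $K$ for the program \eqref{eq:generalization} directly as a test object on an optimum stable set, without passing to a dual SDP. Concretely, fix any finite stable set $C \subseteq V$ and any feasible pair $(K,\lambda)$ for \eqref{eq:generalization}, and exploit the fact that positivity of $K$ gives positive semidefiniteness of every finite principal submatrix extracted from $K$.

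First, I would set $u_x = 1$ for every $x \in C$ and apply the defining inequality of a positive kernel to the finite sequence of points in $C$. This yields
\begin{equation*}
0 \;\leq\; \sum_{x \in C} \sum_{y \in C} K(x,y).
\end{equation*}
Next, I would split the double sum into diagonal and off-diagonal parts and substitute the constraint values from \eqref{eq:generalization}. On the diagonal, $K(x,x) = \lambda - 1$ for each of the $|C|$ vertices. Off the diagonal, every pair $\{x,y\}$ with $x \neq y$ in $C$ satisfies $\{x,y\} \notin E$ because $C$ is stable, so $K(x,y) = -1$. Therefore
\begin{equation*}
0 \;\leq\; |C|(\lambda - 1) \;+\; |C|(|C|-1)\cdot(-1) \;=\; |C|\lambda \;-\; |C|^2,
\end{equation*}
and dividing by $|C| > 0$ gives $\lambda \geq |C|$.

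Finally, I would take $C$ to be a stable set of maximum cardinality, so that $|C| = \alpha(G(V,I))$, and then take the infimum over all feasible $\lambda$, obtaining $\vartheta(G(V,I)) \geq \alpha(G(V,I))$. Nothing sharp needs to be checked about infinite-dimensional optimization here: the argument is entirely elementary once the definition of a positive Hilbert-Schmidt kernel is in hand. The only subtle point, and what I would call the main (mild) obstacle, is the shift from the primal formulation \eqref{eq:origthetaprimal} — where the upper-bound proof used a feasible primal matrix built from $C$ and needed LP/SDP duality — to the dual-style formulation \eqref{eq:generalization}, where one must resist the temptation to dualize and instead realize that evaluating $K$ on the characteristic function of $C$ (i.e.\ taking the all-ones test vector supported on $C$) already does the work. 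This is exactly the strategy foreshadowed in the paragraph preceding the theorem.
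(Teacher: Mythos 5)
Your proposal is correct and follows essentially the same route as the paper: extract the finite principal submatrix indexed by a stable set $C$, test positive semidefiniteness against the all-ones vector, split the resulting sum into diagonal and off-diagonal parts, and substitute the feasibility constraints to get $\lambda \geq |C|$. The only cosmetic difference is that you substitute $K(x,x)=\lambda-1$ at once, while the paper first isolates $|C|-1 \leq K(c,c)$, but the argument is identical.
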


\begin{proof}
  Let $C$ be a stable set of $G(V,I)$ and let $K$ be a kernel which
  satisfies the conditions in \eqref{eq:generalization}. We consider
  the $|C| \times |C|$ matrix $(K(c,c'))_{(c,c') \in C^2}$ which we
  extract from $K$. Because this matrix is positive semidefinite we
  have
\begin{equation*}
0 \leq \sum_{(c,c') \in C^2} K(c,c').
\end{equation*}
On the other hand,
\begin{equation*}
\sum_{(c,c') \in C^2} K(c,c')= \sum_{c} K(c,c) + \sum_{c \neq c'} K(c,c')
\leq |C| K(c,c) - |C|(|C| - 1),
\end{equation*}
so that $|C| - 1 \leq K(c,c)$ and the statement follows.
\end{proof}

In the same way one can define $\vartheta'(G(V,I))$ by strengthening
the condition $K(x,y) = 1$ to $K(x,y) \leq -1$. Then, one has
\begin{equation*}
\vartheta(G(V,I)) \geq \vartheta'(G(V,I)) \geq \alpha(G(V,I)).
\end{equation*}

\subsection{Exploiting symmetry}
\label{ssec:symmetry}

\textsl{We should start to speak about symmetry.} In this section we
show the basic step to simplify the computation of the theta function
for symmetric distance graphs. Note that this basic step can be
applied to other semidefinite programs which are invariant under the
action of a group. Once one has identified this symmetry, this basic
step, as well as the following steps, can be applied to some extend
mechanically.

\medskip

Usually many symmetries of a distance graph come from the \defi{automorphism
  group} of the underlying metric space $V$. It is the group of all
permutations $u : V \to V$ which leave the distance map $d$ invariant:
\begin{equation*}
\Aut(V) = \{u : V \to V : \text{$d(x,y) = d(ux,uy)$ for all $x,y \in V$}\}.
\end{equation*}

Let us take a look at the automorphism group of the three examples
\ref{ex:shortestpath}--\ref{ex:sphere}:

\begin{example}
  The automorphism group of a metric space defined by the shortest
  paths in a connected graph $G = (V,E)$ equals the automorphism group
  of the underlying graph:
\begin{equation*}
\Aut(G) = \{u : V \to V : \text{$\{x,y\} \in E$ if and only if $\{ux,uy\} \in E$}\}.
\end{equation*}
\end{example}

Computing the automorphism group is not easy. Deciding whether the
automorphism group is trivial is as difficult as the graph isomorphism
problem. For this no polynomial time algorithm is known. The graph
isomorphism problem is generally believed to lie in $\text{NP} \cap
\text{co-NP}$. So it is unlikely that it is $\text{NP}$-hard. For more
information on the computational complexity of this problem we refer
to the book K\"obler, Sch\"oning,
T\'oran~\cite{KoeblerSchoeningToran}. On the practical side the
program \texttt{nauty} of McKay \cite{McKay} is a very useful tool for
computing the automorphism group of a graph.

\begin{example}
  The automorphism group of the Hamming cube $\{0,1\}^n$ has order $2^n n!$.
  It is generated by all $n!$ permutations of the $n$ coordinates and
  all $2^n$ switches $0 \leftrightarrow 1$ which one can identify with
  the Hamming cube $\{0,1\}^n$ itself where one considers addition modulo
  $2$.
\end{example}

\begin{example}
  The automorphism group of the unit sphere is the \defi{orthogonal
    group}. It is the group of orthogonal matrices
\begin{equation*}
\Aut(S^{n-1}) = \On = \{u \in \R^{n \times n}: u^t u = I_n\},
\end{equation*}
where $I_n$ is the $(n \times n)$-identity matrix.  
\end{example}

The orthogonal group is generated by a reflection over some hyperplane
and all rotations. It preserves the inner product (and thus the spherical
distance) as well as the Euclidean distance. Another way to view it is
to observe that the elements of $\On$ map orthonormal systems of
$\R^n$ to orthonormal systems.

\medskip

\begin{center}
\fbox{Crucial observation}
\end{center}

\medskip

Now we come to a crucial observation: In the computation of the theta
function \eqref{eq:generalization} of a distance graph $G(V,I)$ one
can restrict the semidefinite program to positive Hilbert-Schmidt
kernels which are invariant under the automorphism group of $V$. The
optimal objective value of this restricted semidefinite program remains
unchanged, and the matrix sizes which are needed in the practical
computation can be drastically smaller.

\medskip

We say that a positive kernel $K \in \MC(V \times V)$ is
\defi{$\Aut(V)$-invariant} if $K(ux,uy) = K(x,y)$ holds for all $u \in
\Aut(V)$ and all $x,y \in V$. For instance, the positive kernel $K \in
\MC(S^{n-1} \times S^{n-1})$ which is defined by the inner product
$K(x,y) = x \cdot y$ is $\On$-invariant.

\medskip

If the graph is finite, this crucial observation is easy. If $K$ is a
feasible solution of \eqref{eq:generalization} so is its
$\Aut(V)$-invariant \defi{group average}
\begin{equation}
\label{eq:groupaverage}
\tilde{K}(x,y) = \frac{1}{|\Aut(V)|}\sum_{u \in \Aut(V)} K(ux,uy).
\end{equation}
This is also easy to verify, e.g.\ we have for $x,y \in V$ with
$\{x,y\} \not\in E$
\begin{equation*}
  \tilde{K}(x,y) = \frac{1}{|\Aut(V)|}\sum_{u \in \Aut(V)} K(ux,uy) = \frac{|\Aut(V)|}{|\Aut(V)|} (-1) = -1.
\end{equation*}
The matrix $\tilde{K}$ is positive semidefinite because it is a
nonnegative sum of $|\Aut(V)|$ positive semidefinite matrices
$K_u(x,y) = K(ux,uy)$. Also the objective values of $K$ and $\tilde{K}$
coincide.

\medskip

If the distance graph is infinite we need a replacement for the finite
sum~\eqref{eq:groupaverage}. For this we use the invariant integral of
the group $\Aut(V)$. If $\Aut(V)$ is finite, then the invariant
integral is given by the sum~\eqref{eq:groupaverage}.  Generally, this
invariant integral is defined for compact topological groups and it
can be constructed from the Haar measure (see e.g.\ the original
Haar~\cite{Haar}, or the standard text on measure theory
Taylor~\cite{Taylor}). Lov\'asz \cite{Lovasz2} gives an elementary,
combinatorial construction of the invariant integral based on the
marriage theorem of matching theory.

\medskip

Here we only give the defining properties of the invariant integral. For
the explicit construction we refer the interested reader to the above
mentioned literature. Actually one could establish the invariant
integral in our three examples directly.

\medskip

Let $\Gamma$ be a compact topological group. A \defi{topological
  group} is a group which is equipped with a topology so that
multiplication and inversion are continuous functions. Examples of
compact topological groups are all finite groups and the orthogonal
group $\On$.  The topology here comes from the topology defined on $(n
\times n)$-matrices. A topological group which is not compact is the
group of translations $(\R^n,+)$.

\medskip

We consider the space of complex-valued continuous functions on the
group $\MC(\Gamma)$.  Then there is a unique map $\int_{\Gamma} :
\MC(\Gamma) \to \C$, the \defi{invariant integral}, with the following
properties:

(a) Linearity 
\begin{equation*}
  \int_{\Gamma} \alpha f + \beta g = \alpha \int_{\Gamma} f + \beta \int_{\Gamma} g \quad \text{for all $f,g \in \MC(\Gamma)$, $\alpha, \beta \in \C$.}
\end{equation*}

(b) Monotonicity
\begin{equation*}
\int_{\Gamma} |f| \geq 0 \quad \text{for all $f \in \MC(\Gamma)$.}
\end{equation*}

(c) Normalization
\begin{equation*}
\int_{\Gamma} 1 = 1.
\end{equation*}

(d) Invariance
\begin{equation*}
\int_{\Gamma} f(vu) = \int_{\Gamma} f(u) \quad \text{for all $v \in \Gamma$.}
\end{equation*}

\bigskip

Two quick examples: If $\Gamma$ is a finite group, the invariant
integral simply is
\begin{equation*}
\int_{\Gamma} f = \frac{1}{|\Gamma|}\sum_{u \in \Gamma} f(u).
\end{equation*}
For the two-dimensional rotation group $\SO(\R^2) = \{u \in \Otwo : \det
u = 1\}$ the invariant integral is
\begin{equation*}
\int_{\SO(\R^2)} f = \frac{1}{2\pi} \int_0^{2\pi} f\left(\begin{pmatrix} \cos \theta & -\sin \theta\\ \sin\theta & \cos\theta\end{pmatrix}\right) d\theta.
\end{equation*}

\medskip

Now the proper replacement of the group average given in
\eqref{eq:groupaverage} for general distance graphs $G(V,I)$ is
\begin{equation}
\label{eq:groupaverage2}
\tilde{K}(x,y) = \int_{\Aut(V)} K(ux, uy),
\end{equation}
where we take the invariant integral of the function $u \mapsto
K(ux,uy) \in \MC(\Gamma)$.

\section{Harmonic analysis}
\label{sec:harmonic}

In the last section we showed that in the computation of the theta
function it suffices to consider invariant kernels only.  If one
chooses an appropriate basis of the invariant kernels, then it is
possible to express positivity in an efficient way, namely as the
positive semidefiniteness of a couple of smaller block matrices. To
make this statement precise we have to use some harmonic analysis
which we develop in this section starting from basic principles.

\medskip

To find an appropriate basis we use basic tools from harmonic analysis
and representation theory of compact groups which are usually
associated with the Peter-Weyl theorem. In Section~\ref{ssec:basics}
we present the basic notions and in Section~\ref{ssec:peterweyl} we
state and prove the Peter-Weyl theorem. In Section~\ref{ssec:bochner}
we apply it to find Bochner's characterization of positive, invariant
kernels. Throughout this section we develop the theory with the help
of the spectral theory of positive kernels. Then, in the following
sections we show how the Peter-Weyl theorem and Bochner's
characterization specializes for the computation of the theta
functions of distance graphs on the Hamming cube and on the unit
sphere.

\medskip

Literature: The literature on harmonic analysis and group
representations is huge. Probably the book by Serre~\cite{Serre} on the
linear representation theory of finite groups is the most
prominent. Other books which emphasize the connection of group
representations of finite groups to probability is Diaconis
\cite{Diaconis}, one emphasizing the connection to algebraic
combinatorics is Sagan \cite{Sagan}, and one connecting to chemistry,
error-correcting codes, data analysis, graph theory, and probability is
Terras \cite{Terras}. Further good sources are: Br\"ocker and tom
Dieck \cite{BroeckerTomDieck}, Vinberg \cite{Vinberg}, Carter, Segal,
Macdonald \cite{CarterSegalMacdonald}, Goodman, Wallach
\cite{GoodmanWallach}. The articles Borel \cite{Borel}, Gross
\cite{Gross} and Slodowy \cite{Slodowy} illuminate the historical
background.

\subsection{Basic tools}
\label{ssec:basics}

Let us recall the set-up: $V$ is a compact metric space with measure
$\mu$, and $\Gamma = \Aut(V)$ is the automorphism group of $V$. We
define the action of $\Gamma$ on the linear space of complex-valued
continuous functions $\MC(V)$ by
\begin{equation*}
uf(x) = f(u^{-1}x).
\end{equation*}
We assume that the measure $\mu$ is $\Gamma$-invariant ($\mu(uA) =
\mu(A)$), so that we have an \defi{invariant inner product}
\eqref{eq:innerproduct} on the space of continuous functions:
\begin{equation*}
  \text{$(f,g) = (uf, ug)$ for all $f,g \in \MC(V)$ and $u \in \Gamma$.}
\end{equation*}
The action of $\Gamma$ on the vector space $\MC(V)$ is linear, that is
we have
\begin{equation*}
\text{$u(\alpha f + \beta g) = \alpha u f + \beta u g$ for all $f,g \in \MC(V)$, $\alpha, \beta \in \C$ and $u \in \Gamma$.}
\end{equation*}
Linear actions are called \defi{group representations}. More
vocabulary: A subspace $S \subseteq \MC(V)$ is called
\defi{$\Gamma$-invariant} if $uS = S$ for all $u \in \Gamma$, i.e.\ if
for every $u \in \Gamma$ and for every $f \in S$ we have $uf \in S$ as
well. A nonzero subspace $S$ is called \defi{$\Gamma$-irreducible} if
$\{0\}$ and $S$ are the only $\Gamma$-invariant subspaces of $S$. Let
$S$ and $S'$ be two invariant subspaces. A linear map $T : S \to S'$
is called a \defi{$\Gamma$-map} if $T(uf) = uT(f)$ for all $u \in
\Gamma$, and $f \in \MC(V)$. We say that $S$ and $S'$ are
$\Gamma$-equivalent if there is a bijective $\Gamma$-map between
them. If it is clear from the context which group we consider, we
frequently omit the prefix ``$\Gamma$-''.

\medskip

Two little lemmas will make our live easier. 

\medskip

The first one is also called \defi{Maschke's theorem}.

\begin{lemma}
  Let $S$ be a invariant subspace and let $U \subseteq S$ be an
  invariant subspace of $S$. Then its orthogonal complement in $S$,
  given by
\begin{equation*}
  U^{\perp} = \{g \in S : \text{$(f,g) = 0$ for all $f \in
    U$}\}
\end{equation*}
is invariant as well.
\end{lemma}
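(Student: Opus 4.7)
The plan is to use $\Gamma$-invariance of the inner product together with $\Gamma$-invariance of $U$ (and of $S$) directly.

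First I would fix $g \in U^{\perp}$ and $u \in \Gamma$, and set out to show that $ug \in U^{\perp}$. Two things must be checked: that $ug$ lies in $S$, and that $(f, ug) = 0$ for every $f \in U$. The first is immediate since $S$ is $\Gamma$-invariant, so $ug \in uS = S$.

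For the second, I would exploit the invariance property $(f,g) = (uf, ug)$ stated just before the lemma. Substituting $u^{-1}f$ for $f$ in that identity gives
\begin{equation*}
(u^{-1} f, g) = (u(u^{-1}f), ug) = (f, ug).
\end{equation*}
Now since $U$ is $\Gamma$-invariant, $u^{-1} f \in U$ for every $f \in U$, and since $g \in U^{\perp}$ we conclude $(u^{-1}f, g) = 0$. Thus $(f, ug) = 0$, which is exactly what we needed.

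There is no real obstacle here; the only subtlety worth flagging is that the argument uses the fact that $\Gamma$ acts by unitary (i.e.\ inner-product-preserving) operators, which in turn relies on the assumption that $\mu$ is $\Gamma$-invariant. In the classical finite-group setting one often builds such an invariant inner product by averaging, but in our setup this invariance has already been arranged, so the proof is essentially a one-line computation after the correct substitution $f \mapsto u^{-1} f$.
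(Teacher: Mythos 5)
Your proof is correct and follows essentially the same route as the paper: the key step in both is the identity $(f, ug) = (u^{-1}f, g) = 0$ for $f \in U$, $g \in U^\perp$, using invariance of the inner product and of $U$. You merely spell out a couple of points the paper leaves implicit (that $ug \in S$, and that $u^{-1}f \in U$), which is fine.
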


\begin{proof}
  This comes from the invariant inner product: We have for all $u \in
  \Gamma$
\begin{equation*}
  \text{$(f,ug) = (u^{-1}f, g) = 0$ whenever $f \in U$, $g \in U^{\perp}$.}
\qedhere
\end{equation*}
\end{proof}

Maschke's theorem implies that a finite-dimensional invariant subspace
which is not irreducible splits into an orthogonal sum of irreducible
subspaces.

\medskip

The second lemma is \defi{Schur's lemma}.

\begin{lemma}
  Let $S$ and $S'$ be two irreducible subspaces, and let $T : S \to
  S'$ be a $\Gamma$-map. If $S$ and $S'$ are not equivalent, then $T =
  0$. If they are equivalent then either $T = 0$ or $T$ is bijective.
\end{lemma}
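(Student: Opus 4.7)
The plan is to observe that for any $\Gamma$-map $T \colon S \to S'$, both the kernel $\ker T \subseteq S$ and the image $T(S) \subseteq S'$ are automatically $\Gamma$-invariant subspaces. The irreducibility of $S$ and $S'$ then collapses each of these to one of only two options, and a short case analysis yields the conclusion.

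First I would verify the invariance claims directly from the $\Gamma$-map property $T(uf) = uT(f)$. If $f \in \ker T$ and $u \in \Gamma$, then $T(uf) = uT(f) = 0$, so $uf \in \ker T$; hence $\ker T$ is $\Gamma$-invariant in $S$. Likewise, for any $h = T(f) \in T(S)$ and any $u \in \Gamma$, we have $uh = uT(f) = T(uf) \in T(S)$, so $T(S)$ is $\Gamma$-invariant in $S'$.

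Next, since $S$ is irreducible, $\ker T \in \{\{0\}, S\}$; since $S'$ is irreducible, $T(S) \in \{\{0\}, S'\}$. The two conditions $\ker T = S$ and $T(S) = \{0\}$ are equivalent and both amount to $T = 0$. Otherwise $\ker T = \{0\}$ and $T(S) = S'$, so $T$ is simultaneously injective and surjective, hence bijective. In that case $T$ itself exhibits a bijective $\Gamma$-map from $S$ to $S'$, so $S$ and $S'$ are $\Gamma$-equivalent. Contrapositively, if $S$ and $S'$ are not equivalent, only $T = 0$ survives, which gives the first assertion; and in the equivalent case the dichotomy $T = 0$ or $T$ bijective is exactly the second assertion.

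The main (and essentially only) obstacle is the verification that $\ker T$ and $T(S)$ are invariant subspaces; once that is in hand, irreducibility does all the remaining work and no analytic or measure-theoretic machinery is needed. One small subtlety worth noting is that the lemma as stated does not assume $S$, $S'$ are finite-dimensional, but the argument above never uses dimension, so it goes through verbatim in either setting.
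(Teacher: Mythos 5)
Your proof is correct and follows essentially the same route as the paper: observe that $\ker T$ and $T(S)$ are $\Gamma$-invariant subspaces of the irreducible $S$ and $S'$, and then run the short case analysis. The only cosmetic difference is that you collapse the four formal cases the paper lists into the two that actually occur, and you explicitly verify the invariance of kernel and image, which the paper takes as given.
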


\begin{proof}
  The kernel of $T$ is an invariant subspace of $S$, and the image is
  an invariant subspace of $S'$. Since $S$ and $S'$ are irreducible
  subspaces we are left with four possibilities:
\begin{enumerate}
\item The kernel of $T$ is $\{0\}$ and the image of $T$ is
  $\{0\}$. This could only happen when $S = \{0\}$.
\item The kernel of $T$ is $\{0\}$ and the image of $T$ is $S'$. Then,
  $T$ is a bijective $\Gamma$-map and $S$ and $S'$ are equivalent
  subspaces.
\item The kernel of $T$ is $S$ and the image of $T$ is $\{0\}$. Then,
  $T$ must be the zero map.
\item The kernel of $T$ is $S$ and the image of $T$ is $S'$. This
  could only happen when $S' = \{0\}$.\qedhere
\end{enumerate}
\end{proof}

We apply Schur's lemma to establish some orthogonality relations which
will be essential later.

\begin{lemma}
\label{lem:orthogonalityrelation}
Let $S$ and $S'$ be two irreducible subspaces. Let $e_1, \ldots, e_h$
be a complete orthonormal system of $S$ and let $e'_1, \ldots,
e'_{h'}$ be one of $S'$.
\begin{enumerate}
\item If $S$ and $S'$ are not equivalent, then they are orthogonal to
  each other:
\begin{equation*}
\text{$(e_i, e'_j) = 0$, for $i = 1, \ldots, h$, $j = 1, \ldots, h'$}
\end{equation*}
\item If $S$ and $S'$ are equivalent and if $T : S \to S'$ is a
  bijective $\Gamma$-map, mapping $e_i$ to $e'_i$, then there is a
  constant $c$ so that
\begin{equation*}
(e_i,e'_j) = \left\{
\begin{array}{ll}
c & \text{if $i = j$},\\
0 & \text{otherwise.}
\end{array}
\right.
\end{equation*}
\end{enumerate}
\end{lemma}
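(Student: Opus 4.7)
The plan is to bundle the $h \cdot h'$ inner products $(e_i, e'_j)$ into a single $\Gamma$-equivariant linear map from $S$ to $S'$ and then apply Schur's lemma. Define the orthogonal projection $P \colon \MC(V) \to S'$ by
\begin{equation*}
P(f) = \sum_{j=1}^{h'} (f, e'_j) e'_j,
\end{equation*}
which is well-defined because $e'_1, \ldots, e'_{h'}$ is a complete orthonormal system of $S'$. Since $S'$ is a finite-dimensional $\Gamma$-invariant subspace of $\MC(V)$, its orthogonal complement in $\MC(V)$ is $\Gamma$-invariant by Maschke's theorem, and the uniqueness of the decomposition $\MC(V) = S' \oplus (S')^{\perp}$ shows that $P$ commutes with the $\Gamma$-action. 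Its restriction $P|_S \colon S \to S'$ is therefore a $\Gamma$-map between irreducible subspaces.

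For part (1), Schur's lemma applied to $P|_S$ in the inequivalent case forces $P|_S = 0$. Evaluating at each basis vector gives $0 = P(e_i) = \sum_j (e_i, e'_j) e'_j$, and linear independence of the $e'_j$ yields $(e_i, e'_j) = 0$ for all $i$ and $j$.

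For part (2), I compose with the given bijective $\Gamma$-map to form the $\Gamma$-endomorphism $T^{-1} \circ P|_S \colon S \to S$. Because $S$ is finite-dimensional and the scalars are complex, this endomorphism has some eigenvalue $c \in \C$, so that $T^{-1} \circ P|_S - c\, \Id_S$ is a $\Gamma$-map with nontrivial kernel. Schur's lemma forces this difference to vanish, hence $P|_S = c\, T$ and therefore $P(e_i) = c\, e'_i$ for every $i$. Comparing with $P(e_i) = \sum_j (e_i, e'_j) e'_j$ and once more invoking linear independence of the $e'_j$ delivers $(e_i, e'_j) = c$ when $i = j$ and $0$ otherwise.

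The main obstacle is the step in (2) that upgrades Schur's lemma from the \emph{``zero or bijective''} form stated in the excerpt to the \emph{``scalar multiple of the identity''} form. This upgrade depends on having an eigenvalue available, which needs $S$ to be finite-dimensional over $\C$; both requirements are built into the hypotheses through the finite index $h$ and the fact that $\MC(V)$ is a complex inner product space.
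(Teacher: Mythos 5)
Your proposal is correct and follows essentially the same strategy as the paper: encode the inner products $(e_i,e'_j)$ in a single linear map from $S$ to $S'$, observe it is a $\Gamma$-map, and apply Schur's lemma (the paper uses the map $A(e_i)=\sum_j (e'_j,e_i)\,e'_j$, which differs from your $P|_S$ only by complex conjugation of coefficients). You add one genuine improvement: you identify the map as the restriction of the orthogonal projection onto $S'$ and derive its $\Gamma$-equivariance from Maschke's theorem applied to the decomposition $\MC(V)=S'\oplus (S')^{\perp}$, whereas the paper defines the map ad hoc and defers the check that it is a $\Gamma$-map to an exercise; your treatment of part (2) via the nontrivial kernel of $T^{-1}\circ P|_S - c\,\Id_S$ is a standard and clean packaging of the same eigenvalue argument the paper uses.
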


\begin{proof}
Define the linear map $A : S \to S'$ by
\begin{equation}
\label{eq:A}
A(e_i) = \sum_{j = 1}^{h'} (e'_j, e_i) e'_j.
\end{equation}
This is a $\Gamma$-map. Now the first claim follows immediately from
Schur's lemma: We have $A = 0$ and so $(e'_j, e_i) = 0$. For the
second claim we assume $A \neq 0$. Then $A$ is bijective by Schur's
lemma. Consider the endomorphism $T^{-1}A : S \to S$, $T^{-1}A(e_i) =
\sum_{j=1}^h (e'_j, e_i) e_j$. Since we work over the complex numbers
and $T^{-1}A$ is bijective, it has a nonzero eigenvalue $c$. The
corresponding eigenspace is an invariant subspace of $S$, so it has to
be equal to $S$. Hence $T^{-1}A$ is $c$ times the identity map, which
proves the second claim.
\end{proof}

\begin{exercise}
  Check that the map $A$ defined in \eqref{eq:A} is indeed a
  $\Gamma$-map.
\end{exercise}

\subsection{The Peter-Weyl theorem}
\label{ssec:peterweyl}

The theorem of Peter and Weyl \cite{PeterWeyl} and \cite{Weyl} is the
starting point of harmonic analysis of compact groups. It connect
Fourier analysis with group representations. It shows that the space
$\MC(V)$ decomposes orthogonally into finite-dimensional irreducible
subspaces and that the space $\MC(V)$ has a complete orthonormal
system which is ``in harmony'' with the group $\Gamma = \Aut(V)$.

\begin{theorem}
\label{thm:peterweyl}
  All irreducible subspaces of $\MC(V)$ are of finite dimension. The
  space $\MC(V)$ decomposes orthogonally as
\begin{equation*}
\MC(V) = \bigoplus_{k = 0, 1, \ldots} H_k, 
\end{equation*}
and the space $H_k$ decomposes orthogonally as
\begin{equation*}
H_k = \bigoplus_{i = 1, 2, \ldots, m_k} H_{k,i},
\end{equation*}
where $H_{k,i}$ is irreducible, and $H_{k,i}$ is equivalent to
$H_{k',i'}$ if and only if $k = k'$. The dimension $h_k$ of $H_{k,i}$
is finite, \emph{but the multiplicity $m_k$ can potentially be infinite}.

In other words, $\MC(V)$ has a complete orthonormal system $e_{k,i,l}$,
where $k = 0, 1, \ldots$, $i = 1, 2, \ldots, m_k$, $l = 1, \ldots,
h_k$ so that
\begin{enumerate}
\item the space $H_{k,i}$ spanned by $e_{k,i,1}, \ldots, e_{k,i,h_k}$
  is irreducible,
\item the spaces $H_{k,i}$ and $H_{k',i'}$ are equivalent if and only
  if $k = k'$,
\item there are $\Gamma$-maps $\phi_{k,i} : H_{k,1} \to H_{k,i}$
  mapping $e_{k,1,l}$ to $e_{k,i,l}$.
\end{enumerate}
\end{theorem}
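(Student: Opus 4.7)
The plan is to use the spectral theory of invariant positive Hilbert-Schmidt kernels as the main engine. For each $f \in \MC(V)$, define the invariant kernel
\[
K_f(x,y) = \int_{\Gamma} f(ux) \overline{f(uy)}\, du,
\]
which is continuous, Hermitian, positive, and satisfies $K_f(vx,vy) = K_f(x,y)$ for every $v \in \Gamma$. Invariance implies that $T_{K_f}$ commutes with the $\Gamma$-action on $\MC(V)$, so every eigenspace of $T_{K_f}$ is a $\Gamma$-invariant subspace. By Mercer's theorem from Section~\ref{ssec:kernels} the nonzero eigenspaces are finite-dimensional and mutually orthogonal, and by Maschke's theorem each of them splits orthogonally into irreducible finite-dimensional invariant subspaces. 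This is the reservoir of finite-dimensional irreducibles from which the whole decomposition will be constructed.

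To prove that an arbitrary irreducible invariant subspace $S \subseteq \MC(V)$ is finite-dimensional, fix a nonzero $f \in S$ and work with its closure $\overline{S}$. The Fubini computation
\[
T_{K_f}(g)(x) = \int_{\Gamma} (g, uf)\, f(ux)\, du
\]
exhibits $T_{K_f}(g)$ as an average of translates $u^{-1}f \in S$, so $T_{K_f}$ maps $\MC(V)$ into $\overline{S}$. The positivity
\[
(T_{K_f}(f), f) = \int_{\Gamma} |(f, uf)|^2\, du > 0
\]
(the continuous integrand equals $\|f\|^4 > 0$ at $u = e$) shows $T_{K_f}$ is nonzero on $\overline{S}$. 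Picking a nonzero eigenvalue $\lambda$, the eigenspace $E_\lambda \subseteq \overline{S}$ is finite-dimensional and invariant, and irreducibility of $S$ forces $S \subseteq E_\lambda$, so $\dim S < \infty$.

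Next I would prove that the sum $W$ of all finite-dimensional irreducible invariant subspaces of $\MC(V)$ is dense. If $W^{\perp} \neq \{0\}$, pick $0 \neq g \in W^{\perp}$. The eigenfunctions of $T_{K_g}$ can be chosen to lie in $W$ (each nonzero eigenspace splits into irreducibles by Maschke's theorem), so Mercer's spectral expansion places $T_{K_g}(g)$ in the closure of $W$, whence $(T_{K_g}(g), g) = 0$. But the positivity computation above applied to $K_g$ yields $(T_{K_g}(g), g) > 0$, a contradiction. Thus $W$ is dense, and collecting equivalent irreducible summands yields the decomposition $\MC(V) = \bigoplus_k H_k$ with $H_k = \bigoplus_i H_{k,i}$; orthogonality across distinct isotypical components is guaranteed by Lemma~\ref{lem:orthogonalityrelation}(1).

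Finally, to produce the basis and the $\Gamma$-maps, fix an orthonormal basis $e_{k,1,1},\ldots,e_{k,1,h_k}$ of one copy $H_{k,1}$, choose any bijective $\Gamma$-map $\phi_{k,i}: H_{k,1} \to H_{k,i}$, and rescale it as follows: $\phi_{k,i}^{*}\phi_{k,i}$ is a $\Gamma$-equivariant endomorphism of the irreducible $H_{k,1}$, hence a positive scalar $c$ times the identity (Schur's lemma over $\C$); replacing $\phi_{k,i}$ by $\phi_{k,i}/\sqrt{c}$ makes it unitary, and setting $e_{k,i,l} := \phi_{k,i}(e_{k,1,l})$ produces an orthonormal basis of $H_{k,i}$. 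The main obstacle is the density step, which rests on the strict positivity $(T_{K_g}(g), g) > 0$ for every $g \neq 0$: establishing this carefully, and making sure that eigenfunctions really can be chosen inside $W$, is the technical core of the whole proof.
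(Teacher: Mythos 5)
Your proof follows the same route as the paper's: the invariant kernel $K_f(x,y) = \int_{\Gamma} f(ux)\overline{f(uy)}\,du$, Mercer's theorem for the finite-dimensionality of eigenspaces of nonzero eigenvalues, $\Gamma$-equivariance of $T_{K_f}$ for their invariance, Maschke's theorem to split them into irreducibles, Lemma~\ref{lem:orthogonalityrelation} for the orthogonality relations, and the spectral decomposition for completeness. You flesh out several points the paper compresses into single sentences: the computation showing $T_{K_f}$ maps $\MC(V)$ into the closure of the cyclic subspace generated by $f$, the strict positivity $(T_{K_g}(g),g) = \int_{\Gamma}|(g,ug)|^2\,du > 0$, the density argument (if $g \in W^{\perp}$ then $T_{K_g}(g) \in \overline{W}$ forces $(T_{K_g}(g),g)=0$, contradiction), and in particular the construction of unitary $\Gamma$-maps $\phi_{k,i}$ by using Schur's lemma to rescale $\phi_{k,i}^{*}\phi_{k,i}$. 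The paper only mentions Gram--Schmidt, which by itself does not produce the $\Gamma$-maps in conclusion (3), so your last step genuinely fills a gap in the paper's sketch.

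One step needs repair. In the finite-dimensionality argument you assert that ``irreducibility of $S$ forces $S \subseteq E_{\lambda}$.'' But $E_{\lambda}$ is an invariant subspace of $\overline{S}$, not of $S$; since $S$ need not be closed, irreducibility of $S$ gives no control over invariant subspaces of $\overline{S}$, and a priori $S \cap E_{\lambda}$ could be $\{0\}$ even while $E_{\lambda} \subseteq \overline{S}$. The correct deduction goes through a $\Gamma$-map instead: the orthogonal projection $P_{\lambda}$ onto $E_{\lambda}$ is $\Gamma$-equivariant, so $\ker(P_{\lambda}|_{S})$ is an invariant subspace of $S$, hence equals $\{0\}$ or $S$. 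If it equals $\{0\}$ for some nonzero $\lambda$, then $P_{\lambda}|_{S}$ is an injective $\Gamma$-map and $\dim S \leq \dim E_{\lambda} < \infty$. If instead it equals $S$ for every nonzero $\lambda$, then $f \perp E_{\lambda}$ for every such $\lambda$, so the spectral expansion gives $T_{K_f}(f) = \sum_{\lambda}\lambda P_{\lambda} f = 0$, contradicting $(T_{K_f}(f),f) > 0$. With this patch the argument is sound, and overall your write-up is considerably more complete than the proof in the text.
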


\begin{proof}
  Consider an element $f \in \MC(V)$. Define the positive
  $\Gamma$-invariant kernel $K \in \MC(V \times V)$ using invariant
  integration
\begin{equation*}
K(x,y) = \int_{\Gamma} f(ux) \overline{f(uy)}.
\end{equation*}
The eigenspaces of $T_K$ of nonzero eigenvalues are
finite-dimensional. They are invariant because for an eigenfunction $g
\in \MC(V)$ to an eigenvalue $\lambda$ we have
\begin{eqnarray*}
T_K(ug)(x) & = & \int_V K(x,y) g(u^{-1}y) d\mu(y) \\
& = & \int_V K(x,uy) g(y) d\mu(y)\\
& = & \int_V K(u^{-1}x, y) g(y) d\mu(y)\\
& = & \lambda g(u^{-1}x)\\
& = & \lambda ug(x).
\end{eqnarray*}
Now the statement follows: One breaks the invariant finite-dimensional
eigenspaces into irreducible subspaces using Maschke's theorem. The
orthonormal system one constructs using Gram-Schmidt
orthonormalization. The orthogonality relation follow from
Lemma~\ref{lem:orthogonalityrelation}. Completeness follows form the
spectral decomposition of positive kernels.
\end{proof}

\subsection{Bochner's characterization}
\label{ssec:bochner}

The complete orthonormal system $e_{k,i,l}$ of the Peter-Weyl theorem
is very useful to characterize $\Gamma$-invariant, positive
kernels. This is the contents of the following theorem by Bochner
\cite{Bochner}.

\begin{theorem}
\label{thm:bochner}
  Let $e_{k,i,l}$ be a complete orthonormal system for $\MC(V)$ as in
  Theorem~\ref{thm:peterweyl}. Every $\Gamma$-invariant, positive
  kernel $K \in \MC(V \times V)$ can be written as
\begin{equation}
\label{eq:bochnerrep}
  K(x,y) = \sum_{k = 0, 1, \ldots} \sum_{i,j = 1, 2 \ldots, m_k} f_{k,ij} \sum_{l = 1}^{h_k} e_{k,i,l}(x) \overline{e_{k,j,l}(y)},
\end{equation}
or more economically as
\begin{equation}
\label{eq:bochnerrepeco}
K(x,y) = \sum_{k = 0, 1, \ldots} \langle F_k, Z_k^{(x,y)} \rangle,
\end{equation}
with $(F_k)_{ij} = f_{k,ij}$ and $(Z_k^{(x,y)})_{ij} = \sum_{l =
  1}^{h_k} e_{k,i,l}(x) \overline{e_{k,j,l}(y)}$. Here $F_k$ is
Hermitian and positive. The series converges absolutely and uniformly.
\end{theorem}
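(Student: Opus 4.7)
The plan is to leverage the $\Gamma$-invariance of $K$ to force the associated integral operator $T_K$ into a rigid block form with respect to the Peter--Weyl basis $e_{k,i,l}$, and then to read off \eqref{eq:bochnerrep} as the Fourier expansion of $K$ in the product orthonormal basis of $\MC(V\times V)$. The first step is to observe that $T_K$ is itself a $\Gamma$-map: from $K(ux,uy)=K(x,y)$ and the invariance of $\mu$, the change of variables $y\mapsto uy$ gives $T_K(uf)(x)=\int_V K(u^{-1}x,y)f(y)\,d\mu(y)=uT_K(f)(x)$, exactly the calculation already used in the proof of Theorem~\ref{thm:peterweyl}. I then apply Schur's lemma to the composition $\pi_{k',i'}\circ T_K|_{H_{k,i}}\colon H_{k,i}\to H_{k',i'}$: for $k\neq k'$ non-equivalence of the irreducibles forces the composition to vanish, so $T_K$ preserves each isotypic component $H_k$; for $k=k'$ the space of $\Gamma$-maps $H_{k,i}\to H_{k,i'}$ is one-dimensional, spanned by the concrete intertwiner $\phi_{k,i'}\circ\phi_{k,i}^{-1}$ from Theorem~\ref{thm:peterweyl}. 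Hence there are scalars $f_{k,ij}$ with
\begin{equation*}
T_K(e_{k,j,l})=\sum_{i=1}^{m_k}f_{k,ij}\,e_{k,i,l}\qquad(l=1,\dots,h_k),
\end{equation*}
and the scalar is the same for every $l$ because $\phi_{k,i}$ sends $e_{k,1,l}$ to $e_{k,i,l}$ for each $l$.

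Next I expand $K$ in the complete orthonormal system $\{e_{k,i,l}(x)\overline{e_{k',j,l'}(y)}\}$ of $\MC(V\times V)$. The $((k,i,l),(k',j,l'))$ Fourier coefficient equals
\begin{equation*}
\int_V\int_V K(x,y)\,\overline{e_{k,i,l}(x)}\,e_{k',j,l'}(y)\,d\mu(x)\,d\mu(y)=(T_K e_{k',j,l'},e_{k,i,l}),
\end{equation*}
which by the previous step collapses to $\delta_{k,k'}\delta_{l,l'}f_{k,ij}$; assembling these coefficients produces exactly \eqref{eq:bochnerrep} with $(F_k)_{ij}=f_{k,ij}$. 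Hermitian symmetry of $F_k$ then drops out of the self-adjointness of $T_K$ (equivalently, the Hermitian symmetry of $K$): $f_{k,ij}=(T_K e_{k,j,l},e_{k,i,l})=\overline{(T_K e_{k,i,l},e_{k,j,l})}=\overline{f_{k,ji}}$. Positivity is the statement that for $v\in\C^{m_k}$ and any fixed $l$, the function $g=\sum_j v_j e_{k,j,l}$ satisfies $v^{*}F_k v=(T_K g,g)\ge 0$; the inequality is a standard consequence of the pointwise positivity of $K$ (either by Riemann-sum approximation, or more slickly via Mercer's spectral decomposition, where all eigenvalues of $T_K$ are nonnegative).

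For absolute and uniform convergence of the series I would invoke Mercer's theorem directly. Diagonalizing each $F_k=U_k D_k U_k^{*}$ and setting $\widetilde{e}_{k,\alpha,l}=\sum_i (U_k)_{i\alpha}\,e_{k,i,l}$ gives, by Peter--Weyl, a complete orthonormal system of eigenfunctions of $T_K$ whose eigenvalues are the diagonal entries of $D_k$ (all nonnegative, by the previous paragraph). Mercer's theorem then supplies the absolutely and uniformly convergent spectral expansion of $K$ in the $\widetilde{e}_{k,\alpha,l}$, and re-grouping the (finitely many in $\alpha$) summands with fixed $k$ and $l$ reproduces precisely \eqref{eq:bochnerrep}. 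The main obstacle, as I see it, is careful bookkeeping: one must use the specific intertwiners $\phi_{k,i}$ of Theorem~\ref{thm:peterweyl} rather than merely their existence, so that a single matrix $F_k$ controls $T_K$ on all $h_k$ copies $e_{k,\bullet,l}$ simultaneously; and the transpose/complex-conjugation conventions have to be tracked so that the entries $(F_k)_{ij}=f_{k,ij}$ end up matching the indexing used in \eqref{eq:bochnerrep}.
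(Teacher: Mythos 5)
Your proof is correct and, at bottom, is the same argument as the paper's: both expand $K$ in the product orthonormal system $e_{k,i,l}(x)\overline{e_{k',i',l'}(y)}$ and invoke Schur's lemma to decide which Fourier coefficients can survive. The difference is one of viewpoint. The paper builds the auxiliary $\Gamma$-invariant, possibly degenerate sesquilinear form $(f,g)_K=\int_V\int_V f(x)K(x,y)\overline{g(y)}\,d\mu(x)\,d\mu(y)$ on $\MC(V)$ and applies the orthogonality relations of Lemma~\ref{lem:orthogonalityrelation} to that form, reading off both the vanishing pattern and the positivity of $F_k$ from it. You instead note that $T_K$ is a $\Gamma$-map, apply Schur's lemma directly to the blocks $\pi_{k',i'}\circ T_K|_{H_{k,i}}$, and recover the Fourier coefficient as $(T_K e_{k',j,l'},e_{k,i,l})$; this is the operator incarnation of the paper's bilinear-form device, and the bookkeeping (including the use of the concrete intertwiners $\phi_{k,i}$ to make the scalar $f_{k,ij}$ independent of $l$) is essentially identical. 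What your argument adds is an explicit treatment of the absolute and uniform convergence asserted in the theorem but not proved in the paper: by diagonalizing $F_k=U_kD_kU_k^{*}$ and checking that $\widetilde{e}_{k,\alpha,l}=\sum_i(U_k)_{i\alpha}e_{k,i,l}$ is a complete orthonormal eigensystem of $T_K$ with nonnegative eigenvalues, you reduce the claim to Mercer's theorem — a clean way to close that gap.
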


\begin{proof}
  It is clear the every kernel of the form \eqref{eq:bochnerrep} or
  \eqref{eq:bochnerrepeco} is a positive $\Gamma$-invariant kernel.
  Generally, a kernel $K \in \MC(V \times V)$ can be written as a
  series in the basis $(x,y) \mapsto e_{k,i,l}(x)
  \overline{e_{k',i',l'}(y)}$. Let $f_{k,i,l;k',i',l'}$ be the
  corresponding coefficient. We shall show that $f_{k,i,l;k',i',l'} =
  0$ if $k \neq k'$ or $l \neq l'$ and that $f_{k,i,l;k,i',l}$ does
  not depend on $l$, so that we can set $f_{k,i,l;k,i',l} =
  f_{k,ii'}$. For this we consider the, possibly degenerate,
  $\Gamma$-invariant inner product on $\MC(V)$ defined by $K$:
\begin{equation*}
\label{eq:innerproductK}
  (f,g)_K = \int_V \int_V f(x) K(x,y) \overline{g(y)} d\mu(x)d\mu(y).
\end{equation*}
For this inner product, the orthogonality relations of
Lemma~\ref{lem:orthogonalityrelation} apply, which proves the claim
about the coefficients $f_{k,i,l;k',i',l'}$. Furthermore, the inner
product \eqref{eq:innerproductK} defines an inner product on the space
spanned by the vectors
\begin{equation*}
\varphi_{k,i} = (e_{k,i,1}, \ldots, e_{k,i,h_k}), \quad i = 1, \ldots, m_k,
\end{equation*}
by
\begin{equation*}
(\varphi_{k,i}, \varphi_{k,i'}) = \left(\sum_{l=1}^{h_k} e_{k,i,l}, \sum_{l=1}^{h_k} e_{k,i',l}\right)_K.
\end{equation*}
Hence, $F_k = (f_{k,ii'})$ is positive.
\end{proof}

Bochner's characterization is \defi{the} tool for exploiting symmetry
in semidefinite programs. If one wants to optimize over all
$\Gamma$-invariant kernels, then one only has to optimize over
positive $F_k$'s which have size $m_k \times m_k$. The computation of
$Z_k^{(x,y)}$ is part of the preprocessing. It can be done by hand, as in the
next two sections, or by computer if the group is finite and has
moderate size. See the article Babai, R\'onyai \cite{BabaiRonyai} for
algorithmic aspects.

\section{Boolean harmonics}
\label{sec:boolean}

In this section we specialize the Peter-Weyl theorem and the
characterization of Bochner to the $n$-dimensional Hamming cube
$\{0,1\}^n$. We apply these results to calculate the theta function
for distance graph on $\{0,1\}^n$. Then we give a complete proof of
the specialization using Fourier analysis of Boolean functions.

\subsection{Specializations}
\label{ssec:hammingspecialization}

Recall that the space of continuous functions $\MC(\{0,1\}^n)$, also
known as complex vectors indexed by $\{0,1\}^n$, comes with an inner
product
\begin{equation*} 
(f,g) = \frac{1}{2^n} \sum_{x \in \{0,1\}^n} f(x) \overline{g(x)},
\end{equation*}
which is invariant under the action of the automorphism group. The
group has order $2^n n!$ and it is generated by permutations and
switches $0 \leftrightarrow 1$. We make extensive use of the functions
$\chi_y(x) = (-1)^{y \cdot x}$, which are called
\defi{characters}. The character $\chi_y$ evaluates to $1$ if the
Hamming distance between $x$ and $y$ is an even integer and to $-1$ if
it is an odd integer.

\medskip

The Peter-Weyl theorem specializes as follows.

\begin{theorem}
\label{thm:booleanpeterweyl}
  The space $\MC(\{0,1\}^n)$ decomposes orthogonally into irreducible
  subspaces $H_k$:
\begin{equation*}
\MC(\{0,1\}^n) = H_0 \perp H_1 \perp \ldots \perp H_n.
\end{equation*}
Here $H_k$ denotes the subspace spanned by the functions $\chi_y(x) =
(-1)^{y \cdot x}$, with Hamming norm $\|y\| = k$. It is an irreducible
subspace of dimension $h_k = \binom{n}{k}$.
\end{theorem}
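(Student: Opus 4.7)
The plan is to reduce the theorem to explicit Fourier analysis on the abelian group $\mathbb{F}_2^n$, and then bolt on the permutation part. Everything flows from the observation that the characters $\chi_y$ simultaneously diagonalize the abelian subgroup of switches.

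First I would set up the basics. The $2^n$ characters $\chi_y(x)=(-1)^{y\cdot x}$, $y\in\{0,1\}^n$, satisfy the Fourier orthogonality relation $(\chi_y,\chi_{y'})=\delta_{y,y'}$, so they form a complete orthonormal basis of $\MC(\{0,1\}^n)$. Since $|\{y:\|y\|=k\}|=\binom{n}{k}$, this immediately gives $\dim H_k=\binom{n}{k}$, the orthogonality $H_k\perp H_{k'}$ for $k\neq k'$, and the direct sum decomposition $\MC(\{0,1\}^n)=H_0\perp\cdots\perp H_n$.

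Next I would compute the action of $\Gamma$ on the character basis. For a coordinate permutation $\sigma\in S_n$, using $uf(x)=f(u^{-1}x)$ and $y\cdot\sigma^{-1}x=(\sigma y)\cdot x$, one gets $\sigma\chi_y=\chi_{\sigma y}$. For a switch $u_z$ (addition of $z$ modulo $2$, which is its own inverse), one gets $u_z\chi_y=(-1)^{y\cdot z}\chi_y$. Both formulas preserve the Hamming weight $\|y\|$, so each $H_k$ is $\Gamma$-invariant.

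The heart of the proof is irreducibility of $H_k$; this is the only step that needs a real argument. Let $W\subseteq H_k$ be a nonzero $\Gamma$-invariant subspace. The switch subgroup $N=\{u_z:z\in\{0,1\}^n\}$ is abelian and, by the formula above, is simultaneously diagonalized by the $\chi_y$'s; moreover the assignment $y\mapsto(z\mapsto(-1)^{y\cdot z})$ is injective (nondegeneracy of the dot product over $\mathbb{F}_2$), so distinct characters $\chi_y$ lie in distinct joint eigenspaces of $N$, and each joint eigenspace is one-dimensional. Consequently any $N$-invariant subspace of $H_k$ is spanned by a subset of $\{\chi_y:\|y\|=k\}$, and in particular $W$ is. But $W$ is also $S_n$-invariant, and $S_n$ acts transitively on $\{y\in\{0,1\}^n:\|y\|=k\}$ (hence on the corresponding characters) via $\sigma\chi_y=\chi_{\sigma y}$. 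Therefore $W$ must contain all of them, so $W=H_k$.

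The only step I expect to require care is the one-dimensionality of the joint $N$-eigenspaces, which is the engine of the irreducibility argument; once that is in place the rest is bookkeeping. An alternative (slightly heavier) route would be to invoke Theorem \ref{thm:peterweyl} to get some decomposition into irreducibles a priori and then identify the pieces with the $H_k$ using the same eigenspace analysis, but the direct argument above seems cleaner and self-contained.
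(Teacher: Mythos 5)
Your proof is correct, and it takes a genuinely different route from the paper's. The paper proves irreducibility of $H_k$ via the zonal spherical function criterion (Lemma~\ref{lem:zonal}): it shows that the space of functions in $H_k$ invariant under $\Stab(0^n)=S_n$ is one-dimensional, because $S_n$-invariance of $f=\sum_{\|y\|=k}\widehat f(y)\chi_y$ forces all Fourier coefficients to coincide, and then invokes the lemma that a unique zonal function implies irreducibility. You instead take an arbitrary nonzero invariant subspace $W\subseteq H_k$ and argue directly: since the abelian normal subgroup $N$ of switches acts on $H_k$ with distinct one-dimensional eigenspaces $\C\chi_y$ (injectivity of $y\mapsto\chi_y|_N$), any $N$-invariant subspace is a span of some of the $\chi_y$; $S_n$-invariance together with transitivity of $S_n$ on $\{y:\|y\|=k\}$ then forces $W=H_k$. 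Both arguments ultimately rest on the same two facts---the $N$-eigenspace structure and the $S_n$-transitivity on weight-$k$ characters---but your version is more self-contained and elementary, exploiting that $\Aut(\{0,1\}^n)$ contains a large abelian normal subgroup, whereas the paper's zonal-function lemma is the more portable tool: it is reused verbatim for the sphere in Section~\ref{sec:spherical}, where no such abelian normal subgroup is available. One small point worth spelling out if you write this up fully: the step ``any $N$-invariant subspace is spanned by a subset of the $\chi_y$'' needs the standard projection argument (averaging $u_z w$ against $\overline{\chi_y(z)}$ over $z\in N$ stays inside $W$), which is the same computation that underlies the isotypic decomposition for abelian groups.
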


In the decomposition all irreducible subspaces have multiplicity
$1$. We have $H_k = H_{k,1}$ in the notation of
Theorem~\ref{thm:peterweyl}.  Hence, Bochner's characterization
(Theorem~\ref{thm:bochner}) of the $\Aut(\{0,1\}^n)$-invariant,
positive kernels only involves positive semidefinite matrices of size
$1 \times 1$, i.e.\ nonnegative scalars. The automorphism group acts
\defi{distance transitively} on pairs: For every two pairs of points
$(x,y)$ and $(x',y')$ having the same Hamming distance $\delta(x,y) =
\delta(x',y')$ there is a group element $u \in \Aut(\{0,1\}^n)$ which
transforms the pairs into each other $(ux,uy) = (x',y')$. So
$\Aut(\{0,1\}^n)$-invariant kernels are actually functions depending
only on the distance $\delta$.

\begin{theorem}
\label{thm:booleanbochner}
  Every $\Aut(\{0,1\}^n)$-invariant, positive kernel $K \in
  \MC(\{0,1\}^n \times \{0,1\}^n)$ is of the form
\begin{equation*}
K(x,y) = \sum_{k = 0}^n f_k K^n_k(\delta(x,y)), \quad f_0, \ldots, f_n \geq 0,
\end{equation*}
where $K^n_k$ denotes a the Krawtchouk polynomial of degree $k$.
\end{theorem}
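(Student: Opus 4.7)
The plan is to apply Bochner's characterization (Theorem~\ref{thm:bochner}) using the Peter-Weyl decomposition of $\MC(\{0,1\}^n)$ that was already specialized in Theorem~\ref{thm:booleanpeterweyl}, and then identify the resulting basis kernels $Z_k^{(x,y)}$ with the Krawtchouk polynomials.

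First, I would record that the functions $\chi_y(x) = (-1)^{y\cdot x}$ with $\|y\|=k$ form a complete orthonormal system of $H_k$ with respect to the normalized inner product $(f,g) = 2^{-n}\sum_x f(x)\overline{g(x)}$: they are real, and $(\chi_y,\chi_z) = 2^{-n}\sum_x (-1)^{(y+z)\cdot x}$ vanishes unless $y=z$ (in which case it equals $1$). Hence Theorem~\ref{thm:booleanpeterweyl} fits the hypothesis of Theorem~\ref{thm:bochner} with multiplicities $m_k = 1$ and dimensions $h_k = \binom{n}{k}$.

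Next, because each $m_k = 1$, the matrix $F_k$ in Bochner's formula \eqref{eq:bochnerrepeco} is a $1\times 1$ Hermitian positive semidefinite matrix, i.e.\ a nonnegative scalar $f_k \geq 0$. The corresponding basis kernel is
\begin{equation*}
Z_k^{(x,y)} \;=\; \sum_{w \in \{0,1\}^n,\, \|w\|=k} \chi_w(x)\,\overline{\chi_w(y)} \;=\; \sum_{\|w\|=k} (-1)^{w\cdot(x\oplus y)},
\end{equation*}
where $x\oplus y$ is coordinatewise addition modulo $2$ and $\|x\oplus y\| = \delta(x,y)$. So $Z_k^{(x,y)}$ is a function of $\delta(x,y)$ alone, which also reconfirms the stated distance-transitivity of $\Aut(\{0,1\}^n)$ on pairs.

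Finally, I would invoke the standard definition of the Krawtchouk polynomial: for a fixed $z$ with $\|z\|=t$,
\begin{equation*}
K_k^n(t) \;=\; \sum_{\|w\|=k} (-1)^{w\cdot z}
\end{equation*}
(this sum depends only on $t$, as can be checked by splitting $w$ according to how many of its $1$'s fall in the support of $z$, giving the closed form $\sum_{j}(-1)^j \binom{t}{j}\binom{n-t}{k-j}$). Therefore $Z_k^{(x,y)} = K_k^n(\delta(x,y))$, and Bochner's formula becomes $K(x,y) = \sum_{k=0}^n f_k K_k^n(\delta(x,y))$ with $f_k \geq 0$, as required.

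The work is essentially mechanical once Theorems~\ref{thm:peterweyl}, \ref{thm:bochner} and \ref{thm:booleanpeterweyl} are in hand; the only non-bookkeeping step is recognizing the character sum $\sum_{\|w\|=k}(-1)^{w\cdot z}$ as the Krawtchouk polynomial $K_k^n(\|z\|)$, which is a direct combinatorial identity and hence poses no real obstacle.
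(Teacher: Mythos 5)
Your proof is correct and follows essentially the same route as the paper: apply Bochner's theorem to the multiplicity-free Peter-Weyl decomposition $\MC(\{0,1\}^n)=H_0\perp\cdots\perp H_n$ so that each $F_k$ is a nonnegative scalar, and identify the basis kernel $Z_k^{(x,y)}=\sum_{\|w\|=k}\chi_w(x)\overline{\chi_w(y)}$ with $K^n_k(\delta(x,y))$ by the character-sum computation. The only cosmetic difference is that you invoke $x\oplus y$ directly to see that $Z_k$ depends only on $\delta(x,y)$, whereas the paper argues via distance transitivity of $\Aut(\{0,1\}^n)$ and then normalizes to $x=0$, $x'=1^t0^{n-t}$; both lead to the same closed form $\sum_j(-1)^j\binom{t}{j}\binom{n-t}{k-j}$.
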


\defi{Krawtchouk polynomials} $K^n_k(t)$ are orthogonal polynomials
for the inner product
\begin{equation*}
(P,Q) = \sum_{t = 0}^n \binom{n}{t} P(t) Q(t),
\end{equation*}
which are normalized by $K^n_k(0) = \binom{n}{k}$.  The first few
Krawtchouk polynomials are
\begin{equation*}
\begin{split}
& K^n_0(t) = 1, \\
& K^n_1(t) = -2t + n, \\
& K^n_2(t) = 2t^2 - 2nt + \binom{n}{2},
\end{split}
\end{equation*}
and in general 
\begin{equation*}
K^n_k(t) = \sum_{i = 0}^k \binom{t}{i} \binom{n-t}{k-i} (-1)^i.
\end{equation*}
See the books Szeg\"o \cite{Szego} or MacWilliams, Sloane
\cite{MacWilliamsSloane} for more information on Krawtchouk
polynomials.

\subsection{Linear programming bound for binary codes}
\label{ssec:hamminglp}

Now the calculation of the theta function on the Hamming cube reduces
to Delsarte's linear programming bound which was introduced by
Delsarte \cite{Delsarte}. The connection between Lov\'asz' theta
function and Delsarte's linear programming bound was first observed by
McEliece, Rodemich, Rumsey Jr.\ \cite{McElieceRodemichRumsey} and
independently by Schrijver \cite{Schrijver1}.

\begin{equation*}
\begin{split}
  \vartheta'(\{0,1\}^n, \{1,\ldots,d-1\}) = \min\Big\{ \lambda \;\; :
\;\; &
\text{$K \in \MC(\{0,1\}^n \times \{0,1\}^n)$}\\
& \text{$\quad$ positive semidefinite,}\\
& \text{$\quad$ $\Aut(\{0,1\}^n)$-invariant},\\
& \text{$K(x,x) = \lambda - 1$ for all $x \in \{0,1\}^n$,}\\
& \text{$K(x,y) \leq -1$ if $\delta(x,y) \in \{d, \ldots, n\}$}\Big\}.
\end{split}
\end{equation*}
Now we represent the positive semidefinite matrix $K \in \MC(\{0,1\}^n
\times \{0,1\}^n)$ which is $\Aut(\{0,1\}^n)$-invariant by $K(x,y) =
\sum_{k=0}^n f_k K^n_k(\delta(x,y))$ with $f_0, \ldots, f_n \geq 0$,
using the characterization of Bochner, and get the following linear
program in the variables $f_0, \ldots, f_n$. 
\begin{equation*}
\begin{split}
 \min\Big\{ 1 + \sum_{k = 0}^n f_k K^n_k(0)  \;\; : \;\; &
\text{$f_0, \ldots, f_n \geq 0$,}\\[-0.25cm]
& \text{$\sum_{k = 0}^n f_k K^n_k(t) \leq -1$ if $t \in \{d, \ldots, n\}$}\Big\}.
\end{split}
\end{equation*}

\begin{exercise}
  Calculate $\vartheta'(\{0,1\}^n,\{1, \ldots, d-1\})$ for, say, all
  $n = 1,\ldots, 128$ and all $d$.
\end{exercise}

\subsection{Fourier analysis on the Hamming cube}
\label{ssec:hammingfourier}

To show Theorem~\ref{thm:booleanpeterweyl} and
Theorem~\ref{thm:booleanbochner} we make use of discrete Fourier
analysis of Boolean functions. This theory has recently been
successfully developed and applied to problems concerning threshold
phenomena and influence in combinatorics, computer science, economics,
and political science. E.g.\ H\aa{}stad's result on the hardness of
the stability number, we refered earlier to, uses Fourier analysis of
Boolean functions. Kalai and Safra \cite{KalaiSafra} survey these
developments. One can find further information on Boolean harmonics
e.g.\ in the book Terras \cite{Terras}, in the paper Dunkl
\cite{Dunkl}, or in Delsarte's thesis \cite{Delsarte}.

\medskip

A complex-valued function on the $n$-dimensional Hamming cube $f :
\{0,1\}^n \to \C$ has the \defi{Fourier expansion}
\begin{equation*}
f(x) = \sum_{y \in \{0,1\}^n} \widehat{f}(y) \chi_y(x),
\end{equation*}
with the characters $\chi_y(x) = (-1)^{y \cdot x}$ and the
\defi{Fourier coefficients} $\widehat{f}(y)$. The characters $\chi_y$,
$y \in \{0,1\}^n$, which sometimes are called \defi{Walsh functions},
form an orthonormal basis of the space $\MC(\{0,1\}^n)$. The
one-dimensional subspace spanned by each $\chi_y$ is invariant under
the abelian group $(\{0,1\}^n, +)$ of addition modulo $2$, which
corresponds to the switching $0 \leftrightarrow 1$:
\begin{equation*}
u\chi_y(x) = \chi_y(x - u) = (-1)^{y \cdot (x-u)} = (-1)^{y \cdot (-u)} \chi_y(x).
\end{equation*}
Under the complete automorphism group of the Hamming cube, i.e.\ when
also permutations are allowed, then the one-dimensional subspaces are
grouped together according the Hamming norm of $y$. The
$\binom{n}{k}$-dimensional spaces
\begin{equation*}
H_k = \lin\{\chi_y : \|y\| = k\}, \quad k = 0, \ldots, n,
\end{equation*}
are $\Aut(\{0,1\}^n)$-invariant. We shall show that they are
irreducible and compute the corresponding basis function $Z_k$ for
Bochner's characterization.

\medskip

For this the notion of zonal spherical function is helpful. A function
$f : \{0,1\}^n \to \C$ is called a \defi{zonal spherical function with
  pole $e \in \{0,1\}^n$} if $f = uf$ for all $u \in \Aut(\{0,1\}^n)$
which stabilize the point $e$, so that $ue = e$.

\begin{lemma}
\label{lem:zonal}
  Let $U \subseteq \MC(\{0,1\}^n)$ be an invariant subspace and let $e
  \in \{0,1\}^n$. If the subspace of zonal spherical functions with
  pole $e$ which lie in $U$ has dimension exactly one, then $U$ is
  irreducible.
\end{lemma}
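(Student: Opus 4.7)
My plan is to argue by contradiction, using Maschke's theorem to split a non-irreducible $U$ into two invariant pieces, and then produce a nonzero zonal spherical function with pole $e$ in each piece.

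Suppose $U$ is not irreducible. By Maschke's theorem (proved earlier) we can write $U = W \oplus W'$ as an orthogonal direct sum of two nonzero $\Gamma$-invariant subspaces (take any proper nonzero invariant $W \subsetneq U$ and let $W'$ be its orthogonal complement inside $U$). If I can show that each of $W$ and $W'$ contains a nonzero zonal spherical function with pole $e$, call them $\phi_1$ and $\phi_2$, then $\phi_1 \perp \phi_2$ forces them to be linearly independent, so the space of zonal spherical functions with pole $e$ lying in $U$ has dimension at least two, contradicting the hypothesis. This reduces the lemma to one key claim.

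The main obstacle, and the heart of the argument, is the following claim: every nonzero $\Gamma$-invariant subspace $S \subseteq \MC(\{0,1\}^n)$ contains a nonzero zonal spherical function with pole $e$. To prove this I would use that $\Gamma = \Aut(\{0,1\}^n)$ acts transitively on $\{0,1\}^n$ (the subgroup of switches $0 \leftrightarrow 1$ already acts transitively). Pick any nonzero $f \in S$; transitivity yields some $u \in \Gamma$ with $f(u^{-1}e) \neq 0$, so after replacing $f$ by $uf \in S$ we may assume $f(e) \neq 0$. Now form the zonal average over the stabilizer,
\begin{equation*}
\tilde f(x) = \frac{1}{|\Stab(e)|}\sum_{v \in \Stab(e)} (vf)(x).
\end{equation*}
By construction $\tilde f$ is fixed by every $v \in \Stab(e)$, so it is a zonal spherical function with pole $e$; it lies in $S$ because $S$ is invariant; and evaluating at $e$ gives $\tilde f(e) = \frac{1}{|\Stab(e)|}\sum_v f(v^{-1}e) = f(e) \neq 0$, so $\tilde f \neq 0$.

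Applying this claim to $W$ and to $W'$ produces the desired $\phi_1,\phi_2$, completing the contradiction. I expect Step 2 (the zonal averaging producing a nonzero output) to be the only place one has to work; once transitivity of $\Gamma$ on the underlying space is invoked, the rest is a direct consequence of Maschke plus orthogonality. The same template will actually apply verbatim later when we specialize to the sphere, since $\On$ acts transitively on $S^{n-1}$ as well.
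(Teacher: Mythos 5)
Your proof is correct, and it takes a genuinely different route in the key construction step. The paper produces the two competing zonal spherical functions via the reproducing-kernel formula: choosing an orthonormal system $e_1,\ldots$ of $V$ and $f_1,\ldots$ of $V^\perp$, it sets $z_V(x) = \sum_i e_i(e)\overline{e_i(x)}$ and $z_{V^\perp}(x) = \sum_i f_i(e)\overline{f_i(x)}$, which are zonal by Proposition~\ref{z_k}-type invariance and sit in $V$ and $V^\perp$ respectively. You instead start from an arbitrary nonzero element of each invariant piece, translate it by transitivity of $\Gamma$ on $\{0,1\}^n$ so that it is nonzero at $e$, and then average over $\Stab(e)$. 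Both are standard devices, but they buy slightly different things. Your averaging argument makes the role of transitivity fully explicit and directly guarantees the zonal function is nonzero (since the value at $e$ is preserved by the average); the paper's reproducing-kernel construction is terser but quietly relies on the same transitivity to ensure $z_V(e) = \sum_i |e_i(e)|^2 \neq 0$, i.e.\ that a nonzero invariant subspace cannot consist entirely of functions vanishing at $e$. Your version is therefore somewhat more self-contained on this point. Both arguments also transfer to the sphere ($\On$ acts transitively on $S^{n-1}$), as you note, and indeed the paper states the spherical analogue (Lemma~\ref{zonal irred}) without reproving it.
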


\begin{proof}
  Suppose $U$ is not irreducible and $U$ decomposes into nontrivial
  invariant subspaces $U = V \perp V^{\perp}$. Let $e_1, \ldots, e_n$
  be an orthonormal system of $V$ and let $f_1, \ldots, f_m$ be one of
  $V^{\perp}$. Then the functions $z_V(x) = \sum_{i = 1}^n e_i(e)
  \overline{e_i(x)}$, and $z_{V^{\perp}}(x) = \sum_{i = 1}^n f_i(e)
  \overline{f_i(x)}$ are two linearly independent zonal spherical
  functions with pole $e$.
\end{proof}

We apply this lemma to $H_k$. Let $f \in H_k$ be a zonal spherical
function with pole $0^n = (0, \ldots, 0)$. The group stabilizing $0^n$
is the group of all permutations. For every permutation $u$ we have
$uf = f$ and in terms of the Fourier expansion:
\begin{equation*}
\sum_{y \in \{0,1\}^n, \|y\| = k} \widehat{f}(y) \chi_{uy}(x) = 
\sum_{y \in \{0,1\}^n, \|y\| = k} \widehat{f}(y) \chi_{y}(x).
\end{equation*}
So the Fourier coefficients $\widehat{f}(y)$ all have to coincide.

\begin{exercise}
Show that the $H_k$'s are pairwise not equivalent.
\end{exercise}

Let us calculate the basis function $Z_k(x,x')$ for Bochner's
characterization (and in this way the zonal spherical functions of
$H_k$ because a zonal spherical function with pole $e$ is a multiple
of $Z_k(e,\cdot)$ as one sees from the proof of
Lemma~\ref{lem:zonal}). We have
\begin{equation*}
Z_k(x,x') = \sum_{\|y\| = k} \chi_u(x) \overline{\chi_u(x')}.
\end{equation*}
Since $Z_k$ is invariant under $\Aut(\{0,1\}^n)$ it only depends on
the Hamming distance $t = \delta(x,x')$ and so we can assume that $x =
0$, and $x' = 1^t0^{n-t}$. A straightforward combinatorial calculation
yields
\begin{equation*}
  Z_k(x,x') = Z_k(0,1^t0^{n-t}) = \sum_{i = 0}^k (-1)^i \binom{t}{i} \binom{n-t}{k-i} = K^n_k(t).
\end{equation*}
Because of Lemma~\ref{lem:orthogonalityrelation} we have the
orthogonality relation
\begin{equation*}
\sum_{t = 0}^n \binom{n}{t} K^n_k(t) K^n_{k'}(t) = 0, \quad \text{if $k \neq k'$.}
\end{equation*}

\section{Spherical harmonics}
\label{sec:spherical}

In this section we replace the $n$-dimensional Hamming cube by the
unit sphere $S^{n-1}$ in $n$-dimensional Euclidean space. First we
specialize the Peter-Weyl theorem and Bochner's characterization.  We
apply these results to calculate the theta function for distance graph
on the sphere. This gives tight bounds for the kissing number in
dimension $8$ and $24$. Then we give a complete proof of the
specialization using the theory of spherical harmonics.

\medskip

Spherical harmonics have been extensively studied due to their
importance in mathematics, physics and engineering. Comprehensive
information is available. Here we follow to some extend the book
\cite[Chapter 9]{AndrewsAskeyRoy} by Andrews, Askey and Roy, and the
paper \cite{CoifmanWeiss} by Coifman and Weiss which links spherical
harmonics with the representation theory of compact topologic
groups. A lot of information is contained in the book
\cite{VilenkinKlimyk} by Vilenkin and Klimyk. Also the book
\cite{Mueller} by M\"uller is very useful.

\medskip

Spherical harmonics serve as a full-fledged example of geometric
analysis: See the books \cite{Berger} by Berger and \cite{Helgason} by
Helgason. The paper by Stanton \cite{Stanton} emphasizes the
similarities between spherical harmonics and Boolean harmonics.

\subsection{Specializations}
\label{ssec:spherespecialization}

Recall that the space of continuous functions $\MC(S^{n-1})$ comes
with an inner product
\begin{equation*} 
(f,g) = \int_{S^{n-1}} f(x) \overline{g(x)} d\omega(x),
\end{equation*}
which is invariant under the action of the orthogonal group $\On$,
the automorphism group of the unit sphere.

\medskip

The Peter-Weyl theorem specializes as follows.

\begin{theorem}
\label{thm:harmonics}
  The space of complex-valued continuous functions on the unit sphere
  decomposes orthogonally into pairwise $\On$-irreducible subspaces as
  follows:
\begin{equation*}
\MC(S^{n-1}) = H_0 \perp H_1 \perp H_2 \perp \ldots
\end{equation*}
Here $H_k$ denotes the space of homogeneous polynomial functions of
degree $k$ which vanish under the Laplace operator $\Delta =
\frac{\partial^2}{\partial x_1^2} + \cdots + \frac{\partial^2}{\partial
  x_n^2}$. The space $H_k$ has dimension $h_k = \binom{n+k-1}{n-1} -
\binom{n+k-3}{n-1}$.
\end{theorem}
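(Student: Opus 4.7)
The plan is to build the decomposition by starting with polynomial restrictions to the sphere, then handle irreducibility via the zonal function argument in parallel to what was done in Section~\ref{ssec:hammingfourier} for the Hamming cube. First I would establish Fischer's decomposition of homogeneous polynomials: the map $\Delta : \Pol_k^{\mathrm{hom}}(\R^n) \to \Pol_{k-2}^{\mathrm{hom}}(\R^n)$ is surjective with kernel the harmonic polynomials, which (together with the injectivity of multiplication by $\|x\|^2$ on $\Pol_{k-2}^{\mathrm{hom}}$) yields $\Pol_k^{\mathrm{hom}} = H_k \oplus \|x\|^2 \, \Pol_{k-2}^{\mathrm{hom}}$. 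Iterating and restricting to the sphere (where $\|x\|^2 \equiv 1$) gives $\Pol_k(S^{n-1}) = H_0 \oplus H_1 \oplus \cdots \oplus H_k$. The dimension $\binom{n+k-1}{n-1} - \binom{n+k-3}{n-1}$ then drops out of $\dim \Pol_k^{\mathrm{hom}} = \binom{n+k-1}{n-1}$, and Stone--Weierstrass shows that polynomial functions are dense in $\MC(S^{n-1})$, so $\bigoplus_k H_k$ is dense.

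\medskip

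Pairwise orthogonality is easiest through the spherical Laplacian: writing the Euclidean Laplacian in polar form $\Delta = \partial_r^2 + \frac{n-1}{r}\partial_r + r^{-2}\Delta_{S^{n-1}}$ and applying it to $f(x) = r^k Y(x/r)$ for $Y \in H_k$, the equation $\Delta f = 0$ forces $\Delta_{S^{n-1}} Y = -k(k+n-2)\,Y$. Thus every element of $H_k$ is an eigenfunction of the symmetric operator $\Delta_{S^{n-1}}$ with eigenvalue $-k(k+n-2)$, and since these eigenvalues are distinct for distinct $k \geq 0$, the $H_k$ are pairwise orthogonal. The same computation supplies pairwise inequivalence as $\On$-representations: $\Delta_{S^{n-1}}$ commutes with the $\On$-action and acts on $H_k$ and $H_{k'}$ by different scalars when $k \neq k'$, so by Schur's lemma there can be no nonzero $\Gamma$-map between them.

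\medskip

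For irreducibility I would invoke the obvious analogue of Lemma~\ref{lem:zonal} for the sphere: it suffices to show that the zonal spherical functions in $H_k$ with pole $e = e_n$ form a one-dimensional subspace. The stabilizer of $e_n$ in $\On$ is $\Onn$ acting on the first $n-1$ coordinates, so a zonal $f \in H_k$ is a polynomial in $\|x'\|^2 = x_1^2 + \cdots + x_{n-1}^2$ and $x_n$ which, by homogeneity of degree $k$, has the form $f = \sum_{j=0}^{\lfloor k/2 \rfloor} a_j \|x'\|^{2j} x_n^{k-2j}$. The harmonicity condition $\Delta f = 0$ translates into a two-term recurrence on the coefficients $a_j$ which pins them down up to an overall scalar. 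The main obstacle will be precisely this one-dimensionality calculation, which is the analytic heart of the spherical harmonics story (its solution is, up to normalization, a Gegenbauer polynomial in $x_n$). Once this is in hand, the zonal lemma yields the irreducibility of each $H_k$, and combined with the previous paragraph the full Peter--Weyl type statement of Theorem~\ref{thm:harmonics} follows.
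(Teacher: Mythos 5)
Your proposal is correct and follows the paper's overall architecture (Fischer-type decomposition of $\Pol_{=k}$, restriction to the sphere, zonal functions for irreducibility), but it diverges from the paper in one genuinely useful way. For orthogonality and inequivalence of the $H_k$, the paper proceeds by showing irreducibility first, then observes that for $n>2$ the dimensions $h_k$ are pairwise distinct, which forces inequivalence and hence $L^2$-orthogonality via the general orthogonality relation Lemma~\ref{lem:orthogonalityrelation}; the case $n=2$ (where dimensions coincide, $h_k = 2$ for $k\ge 1$) is punted to an exercise. You instead observe that $H_k$ lies in the $-k(k+n-2)$-eigenspace of the spherical Laplacian $\Delta_{S^{n-1}}$, a symmetric operator commuting with the $\On$-action, so distinct eigenvalues give you pairwise orthogonality and, by Schur, pairwise inequivalence in one stroke and for all $n\ge 2$ uniformly. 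That is a cleaner and more complete route for this particular step. Your irreducibility argument via the zonal function recurrence is essentially identical to the paper's. One small thing the paper leaves implicit and you make explicit: the passage from $\Pol_{\le d}(S^{n-1}) = H_0\perp\cdots\perp H_d$ to all of $\MC(S^{n-1})$ needs Stone--Weierstrass density of polynomials on the compact sphere, which you correctly invoke.

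Minor cautions: the Fischer decomposition in the paper is set up with the apolar inner product $\langle f,g\rangle = \frac{1}{d!}f(\nabla)\overline{g}$ on $\Pol_{=d}(\C^n)$ and the adjointness of $\omega$ and $\Delta$; you should be aware that this orthogonality lives in polynomial space and does not by itself give $L^2(S^{n-1})$-orthogonality of the restrictions, which is exactly why the extra argument (your eigenvalue argument, or the paper's inequivalence argument) is needed. Also, you write the pole as $e_n$ whereas the paper uses $e = (1,0,\ldots,0)$; of course either works, but keep your coordinates consistent in the recurrence.
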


Since in the decomposition all irreducible subspaces have multiplicity
$1$, i.e\ we have $H_k = H_{k,1}$ in the notation of
Theorem~\ref{thm:peterweyl}, Bochner's characterization of the
$\On$-invariant, positive kernels again only involves nonnegative
scalars. The orthogonal group acts distance transitively on pairs of
points on the sphere. So $\On$-invariant kernels are actually
functions depending only on the spherical distance $d(x,y) = \arccos x
\cdot y$. It turns out that they are univariate polynomial functions
in the inner products $t = x \cdot y$. Schoenberg in \cite{Schoenberg}
was the first who gave this characterization.

\begin{theorem}
\label{thm:schoenberg}
  Every positive, $\On$-invariant kernel $K \in \MC(S^{n-1} \times
  S^{n-1})$ can be written as
\begin{equation*}
  K(x,y) = \sum_{k = 0}^{\infty} f_k P^{(\alpha,\alpha)}_k(x \cdot y), \quad f_0, f_1, \ldots \geq 0.
\end{equation*}
The right hand side converges absolutely and uniformly.  Here
$P^{(\alpha,\alpha)}_k(t)$ is the normalized Jacobi polynomial of
degree $k$ with parameters $(\alpha,\alpha)$, where $\alpha =
(n-3)/2$.
\end{theorem}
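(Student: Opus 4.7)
The plan is to imitate, almost verbatim, the strategy used for the Hamming cube (Theorem~\ref{thm:booleanbochner}), replacing the Peter-Weyl decomposition of $\MC(\{0,1\}^n)$ by the spherical-harmonic decomposition given in Theorem~\ref{thm:harmonics}. First I would invoke Theorem~\ref{thm:harmonics} to obtain the orthogonal decomposition $\MC(\Sn) = \bigoplus_{k \geq 0} H_k$ into pairwise \emph{inequivalent} $\On$-irreducible subspaces. Since each irreducible appears with multiplicity $m_k = 1$, Bochner's characterization (Theorem~\ref{thm:bochner}) reduces to
\begin{equation*}
K(x,y) = \sum_{k=0}^{\infty} f_k Z_k(x,y),\qquad f_k \geq 0,
\end{equation*}
where the Hermitian positive matrix $F_k$ of Theorem~\ref{thm:bochner} collapses to a single nonnegative scalar $f_k$, and
\begin{equation*}
Z_k(x,y) = \sum_{l=1}^{h_k} e_{k,l}(x)\overline{e_{k,l}(y)}
\end{equation*}
for any orthonormal basis $e_{k,1},\ldots,e_{k,h_k}$ of $H_k$. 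Absolute and uniform convergence is part of Theorem~\ref{thm:bochner}.

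Next I would show that $Z_k$ depends only on the inner product $x \cdot y$. Since the orthogonal group acts distance-transitively on $\Sn \times \Sn$ and $Z_k$ is manifestly $\On$-invariant (a trace-type sum over an orthonormal basis is invariant under a change of orthonormal basis, and $\On$ permutes such bases), $Z_k(x,y) = \varphi_k(x \cdot y)$ for some continuous function $\varphi_k : [-1,1] \to \R$. Moreover, because $H_k$ consists of (restrictions to $\Sn$ of) homogeneous polynomials of degree $k$, $\varphi_k$ is a polynomial in $t = x \cdot y$ of degree at most $k$; in fact degree exactly $k$, because fixing $y$ and letting $x$ vary, $Z_k(\cdot, y) \in H_k$ and so is orthogonal to $H_0,\ldots,H_{k-1}$, and these latter spaces contain all polynomials of degree $< k$ restricted to $\Sn$ modulo harmonic correction.

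Now the heart of the argument is to identify $\varphi_k(t)$, up to a constant, with the normalized Jacobi polynomial $P^{(\alpha,\alpha)}_k(t)$, $\alpha = (n-3)/2$. The cleanest route is via orthogonality: by Lemma~\ref{lem:orthogonalityrelation} (the orthogonality of the irreducible subspaces $H_k$ under the $L^2$ inner product), the polynomials $\varphi_k(x \cdot y)$ and $\varphi_{k'}(x \cdot y)$ are orthogonal in $L^2(\Sn \times \Sn)$ for $k \neq k'$. After carrying out the spherical-cap integration in one variable, this translates into orthogonality of $\varphi_k$ and $\varphi_{k'}$ on $[-1,1]$ with respect to the weight $(1-t^2)^{(n-3)/2}\, dt$, which is precisely the Jacobi weight with parameters $(\alpha,\alpha)$, $\alpha = (n-3)/2$. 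A polynomial sequence of exact degrees $0,1,2,\ldots$ orthogonal for this weight is unique up to scaling, so $\varphi_k$ is a scalar multiple of $P^{(\alpha,\alpha)}_k$. Absorbing the scalar into $f_k$ (and checking it is positive, e.g.\ by evaluating $Z_k(x,x) = h_k \geq 0$ against $P^{(\alpha,\alpha)}_k(1) > 0$) gives the claimed representation.

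The main obstacle is this last identification step. Everything up to the orthogonality of the $\varphi_k$ is formal, following Bochner and Peter-Weyl. The nontrivial input is that the zonal function of $H_k$ (i.e.\ the unique $\Stab(e)$-invariant element of $H_k$ up to scalar, cf.\ Lemma~\ref{lem:zonal} applied in the sphere setting) is in fact a Gegenbauer/Jacobi polynomial; equivalently, that the Funk-Hecke relation picks out these polynomials. I would handle this either by appealing to the uniqueness of orthogonal polynomial families for a given weight (the route sketched above), or by a direct calculation producing a polynomial solution of the appropriate eigenvalue equation for the Laplace-Beltrami operator on $\Sn$ restricted to $\Stab(e)$-invariant functions, which is a classical Jacobi differential equation with parameters $(\alpha,\alpha)$.
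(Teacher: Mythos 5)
Your proposal matches the paper's route almost step for step: it combines the spherical-harmonic specialization of Peter--Weyl (Theorem~\ref{thm:harmonics}), the multiplicity-free collapse of Bochner's theorem to nonnegative scalars, distance-transitivity to reduce $Z_k$ to a univariate function of $t = x\cdot y$, and then the orthogonality of inequivalent $H_k$'s combined with the decomposition of the surface measure $d\omega_n(x) = (1-x_1^2)^{(n-3)/2}\,dx_1\,d\omega_{n-1}$ to pin down $\varphi_k$ as a multiple of the Jacobi polynomial with weight $(1-t^2)^{(n-3)/2}$; this is precisely what the paper packages as Theorem~\ref{addtion formula theorem}. The only cosmetic differences are that you spell out the exact-degree argument ($Z_k(\cdot,y)\in H_k$ is orthogonal to $\Pol_{\leq k-1}(\Sn) = H_0\perp\cdots\perp H_{k-1}$, so degree $<k$ would force $Z_k = 0$, contradicting $Z_k(y,y) = h_k > 0$) and the positivity of the normalization constant, both of which the paper leaves implicit.
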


The \defi{Jacobi polynomials} with parameters $(\alpha, \beta)$ are
orthogonal polynomials for the measure $(1-t)^\alpha(1+t)^\beta dt$ on
the interval $[-1,1]$. We denote by $P^{(\alpha,\beta)}_k$ the
\defi{normalized Jacobi polynomial} of degree $k$ with normalization
$P^{(\alpha,\beta)}_k(1) = 1$. The first few Jacobi polynomials are
\begin{equation*}
\begin{split}
& P^{(\alpha,\alpha)}_0(t) = 1, \\
& P^{(\alpha,\alpha)}_1(t) = t, \\
& P^{(\alpha,\alpha)}_2(t) = \frac{n}{n-1}t^2 - \frac{1}{n-1}.
\end{split}
\end{equation*}
For more information on Jacobi polynomials we refer to the books
\cite{AbramowitzStegun} by Abramowitz, Stegun, \cite{Szego} by
Szeg\"o, \cite{AndrewsAskeyRoy} by Andrews, Askey and Roy, and
\cite{Lebedev} by Lebedev.

\subsection{Linear programming bound for the unit sphere}
\label{ssec:spherelp}

Before we give a proof of these two theorems, we demonstrate how to
apply them to calculate the theta function for distance graphs on the
unit sphere. Again the original semidefinite program degenerates to a
linear program because $\MC(S^{n-1})$ decomposes multiplicity-free
into $\Aut(S^{n-1})$-irreducible subspaces. This linear program was
studied by Delsarte, Goethals and Seidel in
\cite{DelsarteGoethalsSeidel} and by Kabatiansky and Levenshtein in
\cite{KabatianskyLevenshtein}, but see also Sloane \cite[Chapter
9]{ConwaySloane}.

\medskip

A case for which the calculation of the theta function is particularly
simple is the graph $G(S^{n-1}, (0,\pi/2))$. Every vertex is adjacent
to a pointed half-sphere. Despite its simplicity this case already
carries all the features of the more complicated kissing number cases
$G(S^7, (0,\pi/3))$ and $G(S^{23}, (0,\pi/3))$.

\medskip

We show that the vertices of a regular cross polytope (the $2n$ points
$\pm e_i$, with $i = 1, \ldots, n$) form an optimal spherical code. We
say that $N$ points on the unit sphere $S^{n-1}$ form an \defi{optimal
  spherical code} if they maximize the minimal distance among all
$N$-point configuration on $S^{n-1}$.

\medskip

Using Theorem~\ref{thm:schoenberg} the theta function
$\vartheta'(G(S^{n-1}, (0,\pi/2)))$ simplifies to
\begin{equation*}
\begin{split}
 \inf\Big\{ 1 + \sum_{k = 0}^{\infty} f_k P^{(\alpha,\alpha)}_k(0)  \;\; : \;\; &
\text{$f_0, f_1, \ldots \geq 0$,}\\[-0.25cm]
& \text{$\sum_{k = 0}^{\infty} f_k P^{(\alpha,\alpha)}_k(\cos \theta) \leq -1$ if $\theta \in [\pi/2,\pi]$}\Big\}.
\end{split}
\end{equation*}

Consider a polynomial $F(t) = \sum_{k=0}^d f_k
P^{(\alpha,\alpha)}_k(t)$ with nonnegative coefficients $f_k$. If
$F(t) \leq -1$ for all $t \in [-1,0]$, then
\begin{equation*}
F(0) + 1 \geq \vartheta'(G(S^{n-1}, (0,\pi/2))) \geq \alpha(G(S^{n-1}, (0,\pi/2))).
\end{equation*}

We give an explicit $F$ which proves the sharp bound of
$2n$. Generally, if a polynomial proves a sharp bound, then all
inequalities in Theorem~\ref{thm:upperbound} have to be equalities. So
we have $F(t) = -1$ for all $t$ which occur as inner product.

\medskip

We make the Ansatz
\begin{equation*}
F(t) = f_0 + f_1 t + f_2 \left(\frac{n}{n-1} t^2 - \frac{1}{n-1}\right)
\end{equation*}
and in order that the bound is sharp we have to have
\begin{eqnarray*}
1 + F(1) & = & 2n,\\
F(0) & = & -1,\\
F(-1) & = & -1.
\end{eqnarray*}
So we consider the system of linear equations
\begin{equation*}
\begin{pmatrix}
1 & 1 & 1\\
1 & 0 & -\frac{1}{n-1}\\
1 & -1 & 1
\end{pmatrix}
\begin{pmatrix}
f_0\\f_1\\f_2
\end{pmatrix}
= 
\begin{pmatrix}
2n-1\\
-1\\
-1\\
\end{pmatrix},
\end{equation*}
which happens to have the nonnegative solution $f_0 = 0$, $f_1 = n$,
$f_2 = n-1$.

\begin{exercise}
  Why does this show that the vertices of a regular cross polytope
  form an optimal spherical code?
\end{exercise}

\begin{exercise}
Show that $\tau_7 = \vartheta'(G(S^7, (0,\pi/3))) = 240$.
\end{exercise}

\begin{exercise}
Show that $\vartheta'(G(S^{24}, (0,\pi/3))) = 196560$.
\end{exercise}

\begin{exercise}
  Show that the vertices of the icosahedron form an optimal spherical
  code.
\end{exercise}

For more on optimal spherical codes see \cite{BannaiSloane},
\cite{Levenshtein}, \cite{Cuypers}, \cite{CohnKumar2},
\cite{BallingerBlekhermanCohnGiansiracusaKellySchuermann},
\cite{BachocVallentin3}. Recently, Wang \cite{Wang} gave a list of
conjectural optimal spherical codes up to dimension $8$ and up to
$27$ points. It is a challenge to find proofs of their optimality.

\subsection{Harmonic polynomials}
\label{ssec:harmonic polynomials}

To establish the specialization of the Peter-Weyl theorem for the case
of the unit sphere we consider harmonic polynomials.

\medskip

For the vector space of complex polynomial functions $f : \C^n \to \C$
which are homogeneous and are of degree $d$ we write
$\Pol_{=d}(\C^n)$. Each $f \in \Pol_{=d}(\C^n)$ satisfies the equation
$f(\alpha x) = \alpha^d f(x)$ for all $\alpha \in \C$ and $x \in
\C^n$. The dimension of $\Pol_{=d}(\C^n)$ is $\binom{n+d-1}{n-1}$. We
define the polynomial $\omega \in \Pol_{=2}(\C^n)$ by $ \omega(x_1,
\ldots, x_n) = x_1^2 + \cdots + x_n^2$.

Furthermore, we introduce two differential operators: The
\defi{Nabla operator}
\begin{equation*}
\nabla = (\frac{\partial}{\partial x_1}, \ldots,
\frac{\partial}{\partial x_n})^t
\end{equation*}
and the \defi{Laplace operator}
\begin{equation*}
\Delta = \nabla^t \nabla = \frac{\partial^2}{\partial x_1^2} + \cdots
+ \frac{\partial^2}{\partial x_n^2} = \omega(\nabla),
\end{equation*}
which maps $\Pol_{=d}(\C^n)$ to $\Pol_{=d-2}(\C^n)$. It will be
convenient to have the following inner product on $\Pol_{=d}(\C^n)$:
\begin{equation*}
\langle f, g \rangle = \frac{1}{d!} f(\nabla) \overline{g},
\end{equation*}
for which the basis of monomials is an orthogonal basis with
\begin{equation*}
\langle x_1^{m_1} \cdots x_n^{m_n}, x_1^{m_1} \cdots x_n^{m_n} \rangle
= \frac{m_1! \cdots m_n!}{d!}.
\end{equation*}
This inner product might look mysterious, but it turns out to be very
natural if one considers polynomials as symmetric tensors as we point
out in the appendix.

\medskip

It is a simple and helpful fact that multiplication with $\omega$ is
the adjoint to the Laplace operator:

\begin{lemma}
\label{adjoint}
For all $f \in \Pol_{=d-2}(\C^n)$ and for all $g \in \Pol_{=d}(\C^n)$
we have
\begin{equation*}
\langle \omega f, g \rangle = \langle f, \Delta g \rangle.
\end{equation*}
\end{lemma}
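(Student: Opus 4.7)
The plan is to reduce the lemma to a single algebraic identity in the ring of constant-coefficient differential operators, leveraging three observations: $\omega(\nabla)=\Delta$ (by the very definition of $\omega$); the assignment $p\mapsto p(\nabla)$ is a ring homomorphism from $\C[x_1,\ldots,x_n]$ into the commutative algebra of constant-coefficient differential operators, so polynomial multiplication corresponds to operator composition; and $\Delta$ has real coefficients, so it commutes with complex conjugation, i.e. $\Delta\overline{g}=\overline{\Delta g}$.

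First I would expand the left-hand side by the definition of the inner product on $\Pol_{=d}(\C^n)$, obtaining $\langle\omega f,g\rangle=\tfrac{1}{d!}(\omega f)(\nabla)\overline{g}$. By the ring-homomorphism property, $(\omega f)(\nabla)=\omega(\nabla)\circ f(\nabla)=\Delta\circ f(\nabla)$. Commuting $\Delta$ past $f(\nabla)$ (both constant-coefficient, hence commuting) and then past the conjugation bar yields
\[
(\omega f)(\nabla)\overline{g}=\Delta\bigl(f(\nabla)\overline{g}\bigr)=f(\nabla)\bigl(\Delta\overline{g}\bigr)=f(\nabla)\overline{\Delta g}.
\]
On the other hand, since $\Delta g\in\Pol_{=d-2}(\C^n)$, the right-hand side equals $\tfrac{1}{(d-2)!}f(\nabla)\overline{\Delta g}$ by the definition of the inner product on that space. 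Comparing the two expressions gives the identity, up to the factorial prefactors.

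The substantive content of the proof is entirely the commutation identity $(\omega f)(\nabla)\overline{g}=f(\nabla)\overline{\Delta g}$; everything else is constant-tracking. I expect the only real bookkeeping step to be reconciling the normalization $1/d!$ on $\Pol_{=d}(\C^n)$ with $1/(d-2)!$ on $\Pol_{=d-2}(\C^n)$. The cleanest sanity-check is to evaluate both sides on monomials $f=x^\alpha\in\Pol_{=d-2}$ and $g=x^\beta\in\Pol_{=d}$, using the explicit formula $\langle x^\alpha,x^\alpha\rangle=\alpha!/|\alpha|!$ from the excerpt: $\omega x^\alpha$ expands as $\sum_i x^{\alpha+2e_i}$ and $\Delta x^\beta$ as $\sum_i \beta_i(\beta_i-1)\,x^{\beta-2e_i}$, and one matches nonzero contributions term by term. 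This concrete check both confirms the commutation calculation and pins down the exact constants in the statement.
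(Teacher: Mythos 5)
Your approach is essentially identical to the paper's own: identify $p\mapsto p(\nabla)$ as a ring homomorphism, so that $(\omega f)(\nabla)=\Delta\circ f(\nabla)$, commute the two constant-coefficient operators, and push $\Delta$ through the conjugation bar using that it has real coefficients. But the concern you flag about the factorial prefactors is not mere bookkeeping --- it exposes a slip in the paper. If you carry out the monomial check you propose, with $f=x^\alpha$, $|\alpha|=d-2$, and $g=x^\beta$, $|\beta|=d$, both sides are nonzero exactly when $\beta=\alpha+2e_i$ for some coordinate direction $e_i$, and in that case
\begin{equation*}
\langle\omega f,g\rangle=\frac{\beta!}{d!},\qquad
\langle f,\Delta g\rangle=\frac{\beta_i(\beta_i-1)\,\alpha!}{(d-2)!}=\frac{\beta!}{(d-2)!},
\end{equation*}
so that $\langle f,\Delta g\rangle=d(d-1)\,\langle\omega f,g\rangle$ and the stated equality fails by a constant. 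The paper's one-line proof writes $\langle f,\Delta g\rangle=\frac{1}{d!}f(\nabla)\overline{\Delta g}$, but by the definition given just above, the inner product on $\Pol_{=d-2}(\C^n)$ carries the prefactor $1/(d-2)!$, not $1/d!$. The honest statement is $\langle\omega f,g\rangle=\frac{1}{d(d-1)}\langle f,\Delta g\rangle$, or equivalently the lemma holds verbatim for the unnormalized pairing $f(\nabla)\overline{g}$. Fortunately the discrepancy is harmless downstream: both places the lemma is invoked only need that the two sides vanish simultaneously, and a positive multiplicative constant cannot change that. Your instinct to reconcile the constants before declaring victory was exactly right, and pushing it through the sanity check would have revealed the issue.
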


\begin{proof}
  This follows immediately from the obvious fact that the evaluation map
$f \mapsto f(\nabla)$ is an algebra isomorphism between the algebra
of polynomials $\C[x_1, \ldots, x_n]$ and the algebra of differential
operators $\C[\frac{\partial}{\partial x_1}, \ldots,
\frac{\partial}{\partial x_n}]$. In particular, this implies $(\omega
f)(\nabla) = f(\nabla) \omega(\nabla)$, and hence $ \langle \omega f,
g \rangle = \frac{1}{d!} f(\nabla) \overline{(\Delta g)} = \langle f, \Delta g \rangle$.
\end{proof}

\medskip

The kernel of the Laplace operator is the space of \defi{harmonic
polynomials} which we denote by
\begin{equation*}
\Harm_d = \{f \in \Pol_{=d}(\C^n) : \Delta f = 0\}.
\end{equation*}

\medskip

\begin{theorem}
The space $\Pol_{=d}(\C^n)$ decomposes orthogonally into harmonic
polynomials as follows:
\begin{equation*}
\Pol_{=d}(\C^n) = \Harm_d \perp \omega \Harm_{d-2} \perp \omega^2 \Harm_{d-4} \perp \ldots.
\end{equation*}
The dimension of $\Harm_k$, which we denote by $h_k$, is equal to
$\binom{n+k-1}{n-1} - \binom{n+k-3}{n-1}$.
\end{theorem}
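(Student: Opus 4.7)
The plan is to prove the one-step splitting $\Pol_{=d}(\C^n) = \Harm_d \perp \omega\,\Pol_{=d-2}(\C^n)$ using Lemma~\ref{adjoint}, and then iterate. The dimension formula will fall out of injectivity of multiplication by $\omega$.

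First I would check the orthogonality $\Harm_d \perp \omega\,\Pol_{=d-2}(\C^n)$. For $h \in \Harm_d$ and $f \in \Pol_{=d-2}(\C^n)$, Lemma~\ref{adjoint} gives $\langle \omega f, h\rangle = \langle f, \Delta h\rangle = 0$, so the two subspaces are orthogonal. In fact, the same computation shows more: a homogeneous polynomial $h \in \Pol_{=d}(\C^n)$ is orthogonal to the entire subspace $\omega\,\Pol_{=d-2}(\C^n)$ if and only if $\langle f, \Delta h\rangle = 0$ for all $f \in \Pol_{=d-2}(\C^n)$, and since $\langle\cdot,\cdot\rangle$ is a (nondegenerate) inner product on $\Pol_{=d-2}(\C^n)$, this is equivalent to $\Delta h = 0$, i.e.\ $h \in \Harm_d$. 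Because $\langle\cdot,\cdot\rangle$ is a Hermitian inner product on the finite-dimensional space $\Pol_{=d}(\C^n)$, the orthogonal complement of $\omega\,\Pol_{=d-2}(\C^n)$ has dimension $\dim \Pol_{=d}(\C^n) - \dim\bigl(\omega\,\Pol_{=d-2}(\C^n)\bigr)$, and we have just identified this complement as $\Harm_d$. This yields
\begin{equation*}
\Pol_{=d}(\C^n) = \Harm_d \perp \omega\,\Pol_{=d-2}(\C^n).
\end{equation*}

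Next I would show multiplication by $\omega$ is injective on $\Pol_{=d-2}(\C^n)$. This is clear because $\C[x_1,\dots,x_n]$ is an integral domain and $\omega \neq 0$; alternatively, Lemma~\ref{adjoint} with $g = \omega f$ gives $\|\omega f\|^2 = \langle f, \Delta(\omega f)\rangle$, from which one sees that $\omega f = 0$ forces $f = 0$. Hence $\dim\bigl(\omega\,\Pol_{=d-2}(\C^n)\bigr) = \dim \Pol_{=d-2}(\C^n) = \binom{n+d-3}{n-1}$, so
\begin{equation*}
\dim \Harm_d = \binom{n+d-1}{n-1} - \binom{n+d-3}{n-1},
\end{equation*}
proving the dimension formula. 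To obtain the full decomposition, I would iterate: applying the one-step splitting to $\Pol_{=d-2}(\C^n)$ gives $\Pol_{=d-2}(\C^n) = \Harm_{d-2} \perp \omega\,\Pol_{=d-4}(\C^n)$, and multiplying through by $\omega$ yields $\omega\,\Pol_{=d-2}(\C^n) = \omega\,\Harm_{d-2} \perp \omega^2\,\Pol_{=d-4}(\C^n)$.

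The only subtlety here is that multiplication by $\omega$ need not preserve the particular inner product $\langle\cdot,\cdot\rangle$ on the nose, so I should justify that the sum $\omega\,\Harm_{d-2} \perp \omega^2\,\Pol_{=d-4}(\C^n)$ inside $\Pol_{=d}(\C^n)$ is genuinely orthogonal with respect to $\langle\cdot,\cdot\rangle$. But this again follows directly from Lemma~\ref{adjoint}: for $h \in \Harm_{d-2}$ and $f \in \Pol_{=d-4}(\C^n)$ one has $\langle \omega h, \omega^2 f\rangle = \langle h, \Delta(\omega^2 f)\rangle = \langle h, \omega \Delta(\omega f) + c\,\omega f\rangle$ for a suitable scalar $c$ coming from commuting $\Delta$ past $\omega$, and each term is a multiple of $\omega$ times a polynomial, so taking adjoints once more reduces to $\langle \Delta h, \cdot\rangle = 0$. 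I expect this orthogonality of the iterated pieces to be the only mildly tricky point; continuing by induction on $d$ then gives the claimed decomposition
\begin{equation*}
\Pol_{=d}(\C^n) = \Harm_d \perp \omega\,\Harm_{d-2} \perp \omega^2\,\Harm_{d-4} \perp \cdots,
\end{equation*}
terminating at $\Harm_0$ or $\omega^{d/2}\,\Harm_0$ according to the parity of $d$.
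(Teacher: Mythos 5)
Your proof is correct and takes essentially the same route as the paper: both rest on Lemma~\ref{adjoint}, derive the one-step splitting $\Pol_{=d}(\C^n) = \Harm_d \perp \omega\Pol_{=d-2}(\C^n)$, get the dimension formula from a rank--nullity count (your injectivity of multiplication by $\omega$ is the contrapositive of the paper's surjectivity of $\Delta$), and verify orthogonality of the iterated pieces via the commutation identity $\Delta\omega p = 2(n+2d)p + \omega\Delta p$. The only cosmetic difference is that you peel the $\omega$ off the left slot by another application of adjointness, while the paper invokes the already-proved one-step orthogonality for $\Pol_{=d-2}(\C^n)$; these are the same observation.
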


\begin{proof}
We first show that $\Delta$ maps $\Pol_{=d}(\C^n)$ onto
$\Pol_{=d-2}(\C^n)$: By Lemma~\ref{adjoint} we have for $f \in
(\Delta(\Pol_{=d-2}(\C^n)))^\perp$ and for $g \in \Pol_{=d}(\C^n)$
\[
\langle\omega f, g\rangle = \langle f, \Delta g\rangle = 0,
\]
hence $\omega f$, and so $f$, must be $0$. The dimension formula for
linear maps implies the desired formula for $h_k$.

With similar arguments one shows that
\begin{equation*}
\label{first decomposition}
\Pol_{=d}(\C^n) = \Harm_d \perp \omega \Pol_{=d-2}(\C^n).
\end{equation*}

Before we can inductively derive the orthogonal decomposition as
stated in the theorem we only have to verify that $\omega
\Harm_{d-2}$ is orthogonal to $\omega^2 \Pol_{=d-4}(\C^n)$. The
following identity will be helpful: for all $p \in \Pol_{=d}(\C^n)$,
we have $\Delta \omega p = 2(n + 2d) p + \omega \Delta p$ (in the
calculation, notice that $\sum_{i=1}^n x_i \frac{\partial}{\partial
x_i} p = d p$). Then,
\begin{eqnarray*}
\langle \omega f, \omega^2 g\rangle
& = & \langle f, \Delta \omega^2 g\rangle\\
& = & \langle f, 2(n+2(d-2)) \omega g + \omega \Delta \omega g \rangle\\
& = & \langle f, \omega(2(n+2(d-2)) g + \Delta \omega g) \rangle\\
& = & 0,
\end{eqnarray*}
for all $f \in \Harm_{d-2}$ and $g \in \Pol_{=d-4}(\C^n)$.
\end{proof}

By $\Pol_{\leq d}(\Sn)$ we denote the vector space of complex-valued
polynomial functions on $\Sn$ which are of degree at most $d$. This is
a subspace of $\MC(S^{n-1})$ so it is equipped with the inner product
\[
\prodhaar{f}{g} = \int_{\Sn} f(x)\overline{g(x)} d\omega_n(x).
\]
Recall that the measures on $\Sn$ and $\Snm$ are related by
\begin{equation*}
\label{eq:measurerelation}
d\omega_n(x)=(1-x_1^2)^{(n-3)/2}dx_1
d\omega_{n-1}((x_2^2+\cdots+x_n^2)^{-1/2}(x_2, \ldots, x_n)).
\end{equation*}

The orthogonal group acts on the space of polynomial functions by
$uf(x) = f(u^{-1}x)$. The space of harmonic polynomials $\Harm_d$ is
invariant under this action. Hence, we have for all $f \in \Harm_d$
and all $u \in \On$ that $uf \in \Harm_d$. This follows from the fact
that $\Delta(uf) = u(\Delta f)$. Another more conceptual way to see
that $\Harm_d$ is an $\On$-invariant space uses the fact that the
inner product $\langle \cdot, \cdot \rangle$ is invariant under the
action of $\On$. We will give a proof of this in the appendix where
the connection between symmetric tensors and polynomials will also
make the inner product $\langle \cdot, \cdot \rangle$ more
transparent. It is obvious that the summand $\omega \Pol_{=d-2}(\C^n)$
in the decomposition \eqref{first decomposition} is an $\On$-invariant
space. Then, its orthogonal complement, $\Harm_d$, is an
$\On$-invariant space, too.

\medskip

The space of \textit{spherical harmonics}
\begin{equation*}
H_k = \{f_{|\Sn} : f \in \Harm_k\}
\end{equation*}
is also an $\On$-invariant space. The spaces $\Harm_k$ and $H_k$ are
equivalent as $\On$-invariant spaces: the restriction map $\varphi(f)
= f_{|\Sn}$ is a bijective linear map $\varphi : \Harm_k \to H_k$ with
$\varphi(uf) = u\varphi(f)$ for all $f \in \Harm_k$ and all $u \in
\On$. Note that the inverse map of $\varphi$ is given by
$\varphi^{-1}(f)(x) = \|x\|^k f\big(\frac{x}{\|x\|}\big)$.

\medskip

The following construction will turn out to be important: For an
orthonormal system $e_1, \ldots, e_{h_k}$ of $H_k$ we define the
function
\begin{equation*}
z_k : \Sn \times \Sn \to \C \quad \mbox{by} \quad z_k(x, y) = \sum_{i
= 1}^{h_k} e_i(x) \overline{e_i}(y).
\end{equation*}

\begin{proposition}
\label{z_k}
The following properties hold for the function $z_k$:

(a) $z_k$ does not dependent on the choice of the orthonormal system of
$H_k$.

(b) For all $u \in \On$ and for all $x, y \in \Sn$ we have $z_k(ux,
uy) = z_k(x, y)$.

\end{proposition}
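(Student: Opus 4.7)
For part (a), the plan is a short linear-algebra computation. Suppose $e_1, \ldots, e_{h_k}$ and $e'_1, \ldots, e'_{h_k}$ are two orthonormal systems of $H_k$. Then they are related by a unitary change of basis, i.e., there is a unitary matrix $U = (U_{ij}) \in \C^{h_k \times h_k}$ with $e'_i = \sum_j U_{ij} e_j$. Substituting into $\sum_i e'_i(x)\overline{e'_i(y)}$ and expanding, the coefficient of $e_j(x)\overline{e_l(y)}$ becomes $\sum_i U_{ij}\overline{U_{il}} = (U^*U)_{lj} = \delta_{jl}$, which collapses the double sum back to $\sum_j e_j(x)\overline{e_j(y)}$. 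So $z_k$ is independent of the chosen orthonormal system.

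For part (b), the idea is to use part (a) together with the fact that $H_k$ is $\On$-invariant and carries an $\On$-invariant inner product. Given $u \in \On$, the functions $u e_1, \ldots, u e_{h_k}$ (where $(uf)(x) = f(u^{-1}x)$) again lie in $H_k$ by $\On$-invariance, and they form an orthonormal system because the inner product on $\MC(\Sn)$ is $\On$-invariant (the measure $\omega$ is $\On$-invariant). Applying part (a) to this new system,
\begin{equation*}
z_k(x,y) = \sum_{i=1}^{h_k} (ue_i)(x)\overline{(ue_i)(y)} = \sum_{i=1}^{h_k} e_i(u^{-1}x)\overline{e_i(u^{-1}y)} = z_k(u^{-1}x, u^{-1}y).
\end{equation*}
Replacing $u$ by $u^{-1}$ gives $z_k(ux,uy) = z_k(x,y)$, which is the desired invariance.

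I do not expect a real obstacle here; both parts reduce to checking that the quantity $\sum_i e_i(x)\overline{e_i(y)}$ transforms under a unitary change of orthonormal basis in the standard trace-like way. The only point that requires care is making sure the two ingredients used in (b) are in place: that $H_k$ is $\On$-stable (inherited from the $\On$-invariance of $\Harm_k$ via the restriction isomorphism $\varphi$ discussed just before the proposition) and that the $\MC(\Sn)$-inner product is $\On$-invariant. Both are already available in the text, so the proof will be essentially a two-line computation for (a) and a three-line reduction to (a) for (b).
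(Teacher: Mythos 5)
Your proof is correct and takes essentially the same approach as the paper: a unitary change-of-basis computation for (a), and for (b) the observation that applying $u \in \On$ to an orthonormal system of $H_k$ yields another orthonormal system (by $\On$-invariance of $H_k$ and of the inner product), so that (a) applies. Your use of a genuinely unitary matrix and careful tracking of both summation indices is in fact slightly cleaner than the paper's version, which writes the change of basis as an orthogonal matrix and collapses the cross terms without displaying them.
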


\begin{proof}
(a) Let $e'_1, \ldots, e'_{h_k}$ be a second orthonormal system of
$H_k$. Let $u$ be the orthogonal matrix $u = (u_{ij}) \in
\Orth(\R^{h_k})$ with $ue_i = e'_i$, $i = 1, \ldots, h_k$. Then,
\begin{eqnarray*}
\sum_{i = 1}^{h_k} e'_i(x) \overline{e'_i(y)} & = &
\sum_{i = 1}^{h_k} (u e_i)(x) \overline{(u e_i)(y)}\\
& = & \sum_{i = 1}^{h_k} \sum_{j = 1}^{h_k} u_{ji} e_j(x) u_{ji} \overline{e_j(y)}\\
& = & \sum_{j = 1}^{h_k} \Big(\sum_{i = 1}^{h_k} u_{ji} u_{ji}\Big) e_j(x) \overline{e_j(y)}\\
& = & \sum_{j = 1}^{h_k} e_j(x) \overline{e_j(y)}.
\end{eqnarray*}

(b) Since the inner product on $\Pol_{\leq d}(\Sn)$ is invariant under
$\On$, it follows that $u^{-1}e_i$, $i = 1, \ldots, h_k$ is an
orthonormal system of $H_k$. Now (b) is a consequence of (a).
\end{proof}

As in the case of Boolean harmonics we make use of the concept of
zonal spherical functions to prove the irreducibility of $H_k$. A
function $f : \Sn \to \C$ is called a \defi{zonal spherical function
  with pole} $e \in \Sn$ if $f = uf$ for all $u \in \Stab(\On, e) =
\{u \in \On : ue = e\}$. Property (b) of Proposition \ref{z_k} implies
that $x \mapsto z_k(e, x)$ is a zonal spherical function with pole
$e$. In direct analogy to Lemma~\ref{lem:zonal} we have:

\begin{lemma}
\label{zonal irred}
Let $U \subseteq \Pol_{\leq d}(S^{n-1})$ be an $\On$-invariant space
and let $e \in \Sn$ be a point on the sphere. If the dimension of the
space of zonal spherical functions with pole $e$ which lie in $U$ is
exactly one, then $U$ is an $\On$-irreducible space.
\end{lemma}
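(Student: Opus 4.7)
The plan is to mirror the proof of Lemma~\ref{lem:zonal} step by step, replacing the averaging over a finite group by the basis-independence property of the kernel built from an orthonormal system of an invariant subspace, i.e.\ the analog of Proposition~\ref{z_k}. Arguing by contradiction, assume $U$ is not $\On$-irreducible. By Maschke's theorem, $U = V \perp V^{\perp}$ with both $V$ and $V^{\perp}$ nonzero $\On$-invariant subspaces of $U$. The goal is to exhibit two linearly independent zonal spherical functions with pole $e$ inside $U$, contradicting the hypothesis.

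Pick orthonormal bases $e_1, \ldots, e_r$ of $V$ and $f_1, \ldots, f_s$ of $V^{\perp}$, and form the reproducing-type kernels
\[
\kappa_V(x,y) = \sum_{i=1}^r e_i(x)\,\overline{e_i(y)}, \qquad \kappa_{V^{\perp}}(x,y) = \sum_{j=1}^s f_j(x)\,\overline{f_j(y)}.
\]
The proof of Proposition~\ref{z_k} used only that the $e_i$ form an orthonormal basis of a finite-dimensional $\On$-invariant subspace, so it carries over verbatim: $\kappa_V$ and $\kappa_{V^{\perp}}$ are independent of the chosen orthonormal basis and satisfy $\kappa(ux, uy) = \kappa(x,y)$ for all $u \in \On$. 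Set $z_V(x) = \kappa_V(x,e)$ and $z_{V^{\perp}}(x) = \kappa_{V^{\perp}}(x,e)$. Then $z_V \in V$ and $z_{V^{\perp}} \in V^{\perp}$ by construction, and for $u \in \Stab(\On, e)$
\[
z_V(u^{-1}x) = \kappa_V(u^{-1}x, e) = \kappa_V(x, ue) = \kappa_V(x,e) = z_V(x),
\]
so $z_V$ is a zonal spherical function with pole $e$; the same holds for $z_{V^{\perp}}$.

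It remains to check that $z_V$ and $z_{V^{\perp}}$ are nonzero, for then, lying in the orthogonal subspaces $V$ and $V^{\perp}$, they are automatically linearly independent. The diagonal value $\kappa_V(x,x) = \sum_i |e_i(x)|^2$ is $\On$-invariant in $x$, so by transitivity of $\On$ on $S^{n-1}$ it is constant; integrating against $\omega$ yields $\kappa_V(x,x) = \dim V / \omega(S^{n-1}) > 0$, whence $z_V(e) = \kappa_V(e,e) > 0$, and similarly $z_{V^{\perp}}(e) > 0$. This produces the two linearly independent zonal spherical functions with pole $e$ in $U$ that contradict the hypothesis.

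The argument is essentially bookkeeping and I expect no conceptual obstacle. The only point that deserves care is the direction of the action when verifying the zonal property, since $uf(x) = f(u^{-1}x)$; but once the $\On$-invariance $\kappa_V(ux, uy) = \kappa_V(x,y)$ is in hand — a one-line consequence of basis-independence applied to the orthonormal basis $\{u e_i\}$ of $V$ — the zonal property falls out immediately from $ue = e$.
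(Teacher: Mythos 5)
Your proof is correct and takes the same route the paper intends (by direct analogy to the Boolean Lemma~\ref{lem:zonal}): decompose $U = V \perp V^{\perp}$ by Maschke and produce one zonal spherical function with pole $e$ in each summand via the basis-independent kernel of Proposition~\ref{z_k}. You are in fact slightly more careful than the paper's sketch: taking $z_V(x) = \kappa_V(x,e)=\sum_i e_i(x)\overline{e_i(e)}$ (rather than $\sum_i e_i(e)\overline{e_i(x)}$ as in the Boolean proof) ensures $z_V \in V$ even when the basis is complex-valued, and your argument that $\kappa_V(x,x)$ is a positive constant, so $z_V(e)\neq 0$, fills in the nonvanishing and linear-independence step that the paper leaves unsaid.
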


Now the specialization of the Peter-Weyl theorem given in
Theorem~\ref{thm:harmonics} follows form the following theorem.

\begin{theorem}
\label{second decomposition}
The space of complex-valued polynomials restricted to the sphere which
are of degree at most $d$ decomposes orthogonally into the spaces of
spherical harmonics:
\begin{equation*}
\Pol_{\leq d}(\Sn) = H_0 \perp H_1 \perp \ldots \perp H_d.
\end{equation*}
The $H_k$ are $\On$-irreducible spaces which are pairwise
inequivalent.
\end{theorem}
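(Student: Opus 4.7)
The plan is to prove four things in sequence: that the $H_k$ together span $\Pol_{\leq d}(S^{n-1})$, that they are pairwise orthogonal, that each is $\On$-irreducible, and finally that the $H_k$ are pairwise inequivalent as $\On$-representations.

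For the additive spanning I would use the decomposition $\Pol_{=k}(\C^n) = \Harm_k \perp \omega \Harm_{k-2} \perp \omega^2 \Harm_{k-4} \perp \cdots$ from the preceding theorem. Writing a polynomial of degree at most $d$ as a sum of homogeneous pieces and expanding each piece in this decomposition, the identity $\omega = x_1^2 + \cdots + x_n^2 \equiv 1$ on $S^{n-1}$ collapses $\omega^j \Harm_{k-2j}$ restricted to the sphere to $H_{k-2j}$, giving $\Pol_{\leq d}(S^{n-1}) = H_0 + H_1 + \cdots + H_d$.

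For orthogonality, given $f \in H_k$ and $g \in H_l$ with $k \neq l$, I would harmonically extend to $h_k \in \Harm_k$ and $h_l \in \Harm_l$ and apply Green's second identity on the closed unit ball $B^n$:
\begin{equation*}
\int_{B^n}\bigl(\overline{h_l}\, \Delta h_k - h_k\, \Delta \overline{h_l}\bigr)\, dV = \int_{S^{n-1}}\bigl(\overline{h_l}\, \partial_\nu h_k - h_k\, \partial_\nu \overline{h_l}\bigr)\, d\omega.
\end{equation*}
The left side vanishes by harmonicity; the right side, using that the outward normal derivative on $S^{n-1}$ is radial and that Euler's relation for homogeneous functions gives $\partial_\nu h_k = k h_k$ and $\partial_\nu \overline{h_l} = l \overline{h_l}$, collapses to $(k-l)(f,g) = 0$. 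Hence $(f,g) = 0$ and the sum is an orthogonal direct sum.

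Irreducibility of $H_k$ reduces via Lemma \ref{zonal irred} to checking that the zonal spherical functions with pole $e_n$ inside $H_k$ form a one-dimensional subspace. Any such function is a harmonic polynomial of degree $k$ invariant under $\Orth(\R^{n-1})$ acting on the first $n-1$ coordinates, hence of the form $p = \sum_j c_j\, x_n^{k-2j}\, r^{2j}$ with $r^2 = x_1^2 + \cdots + x_{n-1}^2$. Imposing $\Delta p = 0$ yields a two-term recursion on the $c_j$ which pins them down up to one overall scalar. For pairwise inequivalence I would then invoke the Laplace--Beltrami operator $\Delta_S$ on $S^{n-1}$: since $\Delta$ on $\R^n$ commutes with $\On$, so does $\Delta_S$, and substituting the harmonic extension $r^k f(x/r)$ of $f \in H_k$ into the polar expression $\Delta = \partial_r^2 + \tfrac{n-1}{r}\partial_r + r^{-2}\Delta_S$ yields $\Delta_S f = -k(k+n-2)\, f$. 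These eigenvalues are distinct for distinct $k \geq 0$, so any $\On$-equivariant isomorphism $H_k \to H_l$ would force the eigenvalues to coincide and hence $k = l$.

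The main obstacle I expect is the irreducibility step: one must carefully verify that the recursion for the zonal coefficients $c_j$ really has a one-dimensional solution space (not $0$ and not higher-dimensional), which requires checking that no coefficient of the recursion is forced to vanish identically at an intermediate stage.
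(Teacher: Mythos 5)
Your proposal is largely correct. The irreducibility step---zonal spherical functions and the two-term recursion on the coefficients $c_j$, with the coefficient of $c_j$ nonvanishing so the recursion never degenerates---coincides with the paper's argument, modulo your choice of pole $e_n$ rather than $e_1$. For the spanning step you spell out what the paper leaves implicit: $\omega^j\Harm_{m-2j}$ restricted to $S^{n-1}$ collapses onto $H_{m-2j}$ since $\omega\equiv 1$ there. For orthogonality you take a genuinely different route: the paper obtains it \emph{a posteriori} from inequivalence via Lemma~\ref{lem:orthogonalityrelation}, whereas you give the classical analytic argument from Green's second identity on the unit ball together with Euler's relation $\partial_\nu h_k = k h_k$ on $S^{n-1}$. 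Both work; yours has the virtue of being logically independent of the inequivalence step, which matters because that is the one place where your proposal has a gap.

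For inequivalence the paper argues by dimension: for $n>2$ the $h_k$ are pairwise distinct, so the $H_k$ cannot be isomorphic, and the case $n=2$ (where $h_k=2$ for all $k\geq 1$) is left as an exercise. Your Laplace--Beltrami computation $\Delta_S|_{H_k}=-k(k+n-2)\Id$ is correct and uniform in $n$, which is appealing, but the inference ``so any $\On$-equivariant isomorphism $H_k\to H_l$ would force the eigenvalues to coincide'' does \emph{not} follow merely from the fact that $\Delta_S$ commutes with the group action. If $H_k\cong H_l$, then by Schur's lemma the algebra of $\On$-equivariant operators on $H_k\oplus H_l\subset\MC(S^{n-1})$ is a full $2\times 2$ matrix algebra, which certainly contains $\diag(-\lambda_k,-\lambda_l)$ with $\lambda_k\neq\lambda_l$; so nothing a priori prevents two isomorphic irreducible summands from carrying distinct $\Delta_S$-eigenvalues. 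The missing ingredient is that $\Delta_S$ is (a constant multiple of) the Casimir operator of the $\SO(\R^n)$-action, i.e.\ a sum of squares of the vector fields generated by the Lie algebra. Since an equivariant $T:H_k\to H_l$ intertwines those vector fields, it intertwines $\Delta_S$ as well, and only then does $T(-\lambda_k\Id)=(-\lambda_l\Id)T$ force $\lambda_k=\lambda_l$. You should either invoke the Casimir identity explicitly, or use the paper's dimension count for $n\geq 3$ and settle $n=2$ separately, e.g.\ by comparing $\SO(\R^2)$-characters: a rotation by $\phi$ acts on $H_k$ with trace $2\cos k\phi$, which distinguishes the $k$'s.
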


\begin{proof}
  Using Lemma~\ref{zonal irred} we show that the $H_k$'s are irreducible.
  Let $f \in H_k$ be a zonal spherical function with pole $e =
  (1,0,\ldots,0)^t$. We consider $f$ as a polynomial in
  $\Harm_k$. Then we can write
\begin{equation*}
f(x_1, \ldots, x_n) = \sum_{i = 0}^k x_1^{k-i}p_i(x_2,\ldots,x_n),
\end{equation*}
where $p_i$ is a homogeneous polynomial of degree $i$ which is
invariant under the action of the stabilizer group $\Stab(\On,
e)$. That means that $p_i$ is a radial function, a function which only
depends on the norm of $(x_2,\ldots,x_n)$. Because $p_i$ is
homogeneous of degree $i$ we have $p_i(x_2, \ldots, x_n) = c_i (x_2^2
+ \cdots + x_n^2)^{i/2}$ for some constant $c_i$. Hence, $p_i = 0$
whenever $i$ is an odd integer. So we can write
\begin{equation*}
f(x_1,\ldots,x_n) = \sum_{i = 0}^{k/2} c_i x_1^{k-2i}(x_2^2 + \cdots + x_n^2)^i.
\end{equation*}
Since $f$ is a harmonic polynomial we have (check this)
\begin{equation*}
0 = \Delta f = \sum_{i = 1}^{k/2} (\alpha_i c_i + \beta_i c_{i-1})x_1^{k-2i}(x_2^2 + \cdots + x_n^2)^{i-1},
\end{equation*}
where $\alpha_i = 2i(n+2i-3)$, $\beta_i = (k-2i+1)(k-2i+2)$, and so
$c_i = (-1)^i c_0 \frac{\beta_1 \cdots \beta_i}{\alpha_1 \ldots
  \alpha_i}$, for $i = 1, \ldots, k/2$, which shows that $f$ is
determined by $c_0$ only. Thus, the space of zonal spherical functions
is one-dimensional.

\medskip

If $n > 2$, then, by looking at the dimension $h_k$, it follows
immediately that $H_k$ are pairwise inequivalent and also the
orthogonality relation between them. The case $n = 2$ we leave as an
exercise (see below).
\end{proof}

\begin{exercise}
  Make the link between the spherical harmonics for the unit
  circle~$S^{1}$ and the classical Fourier series of $2\pi$-periodic
  functions. Show how the the functions $\cos kx$, $\sin kx$, with $k
  = 0, 1, 2, \ldots, $ appear as orthonormal system for the spaces
  $H_k$. Show that $H_k$ and $H_{k'}$ with $k \neq k'$ are not
  equivalent.
\end{exercise}

\subsection{Addition formula}
\label{ssec:addition formula}

To establish Bochner's characterization for the sphere we still have
to calculate $Z_k(x,y)$. For this we do not need to write down an
orthonormal system of $H_k$ explicitly. Studying the orthogonality
relations and their interplay with zonal spherical function suffices
here.

\begin{theorem}
\label{addtion formula theorem}
Let $e_1, \ldots, e_{h_k}$ be an orthonormal system of $H_k$. For $x,
y \in \Sn$ we have the addition formula
\begin{equation*}
P^{((n-3)/2,(n-3)/2)}_k(\prodeucl{x}{y}) = \alpha_k \sum_{l = 1}^{h_k} e_l(x) \overline{e_l(y)},
\end{equation*}
for a positive constant $\alpha_k$.
\end{theorem}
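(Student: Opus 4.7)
The plan is to reduce the addition formula to uniqueness of zonal spherical functions in $H_k$ (a one-dimensional space, established in the proof of Theorem~\ref{second decomposition}) together with the characterization of Jacobi polynomials $P^{(\alpha,\alpha)}_k$ as the unique, up to scaling, orthogonal polynomials for the weight $(1-t^2)^\alpha$ on $[-1,1]$.

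First, I would show that $z_k(x,y)$ depends only on $x \cdot y$. By Proposition~\ref{z_k}(b) we have $z_k(ux,uy) = z_k(x,y)$ for all $u \in \On$, and $\On$ acts transitively on pairs of points on $S^{n-1}$ with a prescribed inner product. Hence there is a polynomial $Q_k$ of one real variable with $z_k(x,y) = Q_k(x \cdot y)$; fixing the pole $e = (1,0,\ldots,0)^t$, the function $x \mapsto Q_k(x_1)$ is a zonal spherical function in $H_k$ with pole $e$. The explicit form of such zonal functions derived in the proof of Theorem~\ref{second decomposition} shows moreover that $\deg Q_k = k$.

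Next, I would identify $Q_k$ with the Jacobi polynomial. The orthogonality $H_k \perp H_{k'}$ for $k \neq k'$, combined with the disintegration $d\omega_n(x) = (1-x_1^2)^{(n-3)/2}\,dx_1\,d\omega_{n-1}(\cdots)$ recalled in Section~\ref{ssec:harmonic polynomials}, translates the sphere inner product $(Q_k(x_1), Q_{k'}(x_1))$ into
\[
\int_{-1}^{1} Q_k(t)\,Q_{k'}(t)\,(1-t^2)^{(n-3)/2}\,dt = 0 \qquad (k \neq k').
\]
Thus $\{Q_k\}_{k \ge 0}$ is a family of orthogonal polynomials for the weight $(1-t^2)^{(n-3)/2}$ with $\deg Q_k = k$, and uniqueness forces $Q_k = c_k P^{((n-3)/2,(n-3)/2)}_k$ for some scalar $c_k$. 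To pin down the constant, evaluate at $x = y = e$: one finds $Q_k(1) = z_k(e,e) = \sum_l |e_l(e)|^2 > 0$, while $P^{((n-3)/2,(n-3)/2)}_k(1) = 1$ by the chosen normalization. Setting $\alpha_k = 1/Q_k(1) > 0$ yields the claimed identity.

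The main obstacle is packaging the existing ingredients into the weighted orthogonality statement — specifically, using the measure disintegration to convert orthogonality on $S^{n-1}$ into the one-dimensional orthogonality on $[-1,1]$, and confirming from the recursion in the proof of Theorem~\ref{second decomposition} that $Q_k$ really has degree $k$ (so that the sequence $Q_0, Q_1, \ldots$ spans $\C[t]$ and the uniqueness of orthogonal polynomials applies). Once these are in place, the rest of the argument is formal.
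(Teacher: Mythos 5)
Your argument follows the same route as the paper's proof: you reduce the zonal kernel $z_k$ to a univariate polynomial $Q_k$ of degree $k$ via stabilizer invariance, translate the orthogonality $H_k \perp H_{k'}$ into Jacobi-weight orthogonality on $[-1,1]$ using the measure disintegration, identify $Q_k$ with the normalized Jacobi polynomial up to a constant, and pin down $\alpha_k > 0$ by evaluating at $x = y$. Your write-up is somewhat more explicit than the paper's --- in particular you spell out why $z_k$ depends only on $x \cdot y$ and why $Q_k$ has degree exactly $k$ --- but the underlying approach is identical.
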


\begin{proof}
  Let $z_k$ and $z_k'$ be zonal spherical functions for $H_k$ and
  $H_{k'}$, with $k \neq k'$, both with pole $e = (1,0,\ldots,0)$. By
  the orthogonality relation between nonequivalent $\On$-irreducible
  subspaces it follows
\begin{equation*}
(z_k, z_{k'}) = \int_{S^{n-1}} z_k(x) \overline{z_{k'}(x)} d\omega(x) = 0
\end{equation*}
The functions $z_k$ and $z_{k'}$ are invariant under the stabilizer
group $\Stab(\On,e)$. So they only depend on the first coordinate. One
can use \eqref{eq:measurerelation} to express the integral above as
one involving only $x_1$:
\begin{eqnarray*}
\int_{-1}^1 z_k(x_1,\ldots, x_n) \overline{z_{k'}(x_1, \ldots, x_n)} (1-x_1^2)^{(n-3)/2}dx_1 = 0.
\end{eqnarray*}
Hence, the zonal spherical functions $z_k$ are multiples of Jacobi
polynomials: 
\begin{equation*}
z_k(x_1, \ldots, x_n) = \alpha_k
P^{((n-3/2),(n-3)/2)}_k(x_1),
\end{equation*}
and hence
\begin{equation*}
\sum_{l = 1}^{h_k} e_l(x)\overline{e_l(y)} = \alpha_k P^{((n-3/2),(n-3)/2)}_k(x \cdot y).
\end{equation*}
The constant $\alpha_k$ is positive because
$P^{((n-3/2),(n-3)/2)}_k(1) = 1$.
\end{proof}


%
%
%

\begin{appendix}

\section{Symmetric tensors and polynomials}
\label{sec:tensors}

In this section we write $V$ for $\C^n$. The symmetric group $\Perm_d$
of permutations $\sigma : \{1, \ldots, d\} \to \{1, \ldots, d\}$ acts
on $V^{\otimes d}$ by permuting positions:
\begin{equation*}
\sigma(v_1 \otimes \cdots \otimes v_d) = v_{\sigma^{-1}(1)} \otimes
\cdots \otimes v_{\sigma^{-1}(d)},
\end{equation*}
with $\sigma \in \Perm_d$. The subspace of \defi{symmetric $d$-tensors
  over $V$} is
\begin{equation*}
S_d(V^{\otimes d}) = \big\{ u \in V^{\otimes d} : \mbox{$\sigma u = u$
for all $\sigma \in \Perm_d$} \big\}.
\end{equation*}
The inner product on $V^{\otimes d}$ is given by
\begin{equation*}
\langle v_1 \otimes \cdots \otimes v_d, w_1 \otimes \cdots \otimes
w_d \rangle = \prod_{i=1}^d \prodeucl{v_i}{\overline{w_i}}.
\end{equation*}

Let $e_1, \ldots, e_n$ be the standard basis of $\C^n$. Define the
linear map 
\begin{equation*}
\pi : V^{\otimes d} \to \Pol_{=d}(\C^n),\;\; \mbox{by}\;\; e_{i_1}
\otimes \cdots \otimes e_{i_d} \mapsto x_{i_1} \cdots x_{i_d}.
\end{equation*}
For an $n$-tupel $(m_1, \ldots, m_n)$ of nonnegative integers with
$m_1 + \cdots + m_n = d$ define the \emph{symmetric monomial}
\begin{equation*}
s_{(m_1, \ldots, m_n)} = \sum_{(i_1, \ldots i_d)} e_{i_1} \otimes \cdots
\otimes e_{i_d},
\end{equation*}
where we sum over all $(i_1, \ldots, i_d)$ so that the number $j \in
\{1, \ldots, n\}$ occurs in $(i_1, \ldots, i_d)$ with multiplicity
$m_j$, e.g.
\begin{equation*}
s_{(1,2)} = e_1 \otimes e_2 \otimes e_2 + e_2 \otimes e_1 \otimes e_2
+ e_2 \otimes e_2 \otimes e_1.
\end{equation*}
The symmetric monomials form a basis of $S_d(V)$. We have
\begin{equation*}
\pi(s_{(m_1, \ldots, m_n)}) = \frac{d!}{m_1! \cdots m_n!}x_1^{m_1}
\cdots x_n^{m_n}.
\end{equation*}
Hence, the map $\pi$ restricted to symmetric $d$-tensors over $V$ is
an isometry due to the specific choice of the inner product $\langle
\cdot, \cdot \rangle$ in $\Pol_{=d}(\C^n)$.

The orthogonal group $\On$ acts on $V^{\otimes d}$ by $u(v_1 \otimes
\cdots \otimes v_d) = uv_1 \otimes \cdots \otimes uv_d$, with $u \in
\On$. Let us verify that $\pi$ commutes with the action of $\On$,
i.e.\ $\pi(us) = a\pi(s)$ for all $s \in S_d(V)$ and $u = (u_{i,j})
\in \On$. It suffices to consider the basis $s_{(m_1, \ldots,
m_n)}$. We have
\begin{equation*}
\begin{split}
\pi\big(u{s_{(m_1, \ldots, m_n)}}\big) & = \pi\Big(\sum_{(i_1, \ldots, i_d)}
ue_{i_1} \otimes \cdots \otimes ue_{i_d}\Big)\\ 
& = \pi\Big(\sum_{(i_1, \ldots, i_d)}\sum_{j_1, \ldots, j_d = 1}^n
u_{j_1,i_1} \cdots u_{j_d,i_d} e_{j_1} \otimes \cdots \otimes e_{j_d}
\Big)\\
& = \sum_{(i_1, \ldots, i_d)} \big(\sum_{j=1}^n u_{j,i_1}
x_j\big) \cdots \big(\sum_{j=1}^n u_{j,i_d}
x_j\big)\\
& = \frac{d!}{m_1! \cdots m_n!} \big(\sum_{j=1}^n u_{j,1} x_j\big)^{m_1}
\cdots \big(\sum_{j=1}^n u_{j,n} x_j\big)^{m_n}\\
& = u\pi(s_{(m_1, \ldots, m_n)}).
\end{split}
\end{equation*}

We finish by showing that the inner product on $\Pol_{=d}(\C^n)$ is
invariant under the action of the orthogonal group. For this note that
the inner product on $V^{\otimes d}$ is invariant under this group
action. From the previous considerations we have for all $u \in \On$
the chain of equalities:
\begin{equation*}
\begin{split}
\langle uf, ug \rangle & =
\langle \pi^{-1}(uf), \pi^{-1}(ug) \rangle\\
& = \langle u\pi^{-1}(f), u\pi^{-1}(g)\rangle\\
& = \langle \pi^{-1}(f), \pi^{-1}(g)\rangle \\
& = \langle f, g \rangle.
\end{split}
\end{equation*}

\section{Further reading}

In the last years many results were obtained for semidefinite programs
which are symmetric. This was done for a variety of problems and
applications: 
\begin{itemize}
\item[---] interior point algorithms (Kanno, Ohsaki, Murota, Katoh
  \cite{KannaOhsakiMurotaKatoh}, de Klerk, Pasechnik
  \cite{KlerkPasechnik1}, Murota, Kanno, Kojima, Kojima
  \cite{MurotaKannoKojimaKojima}),
\item[---] polynomial optimization (Gatermann and Parrilo
\cite{GatermannParrilo} and Jansson, Lasserre, Riener, Theobald
\cite{JanssonLasserreRienerTheobald}, Bosse \cite{Bosse}, Laurent
\cite{Laurent2}), 
\item[---] truss topology optimization (Bai, de Klerk,
Pasechnik, Sotirov \cite{BaiKlerkPasechnikSotirov}), 
\item[---] quadratic
assignment problem (de Klerk, Sotirov \cite{KlerkSotirov}), 
\item[---]
fast mixing Markov
chains on graphs (Boyd, Diaconis, Xiao \cite{BoydDiaconisXiao}, Boyd,
Diaconis, Parrilo, Xiao \cite{BoydDiaconisParriloXiao}), 
\item[---] graph
coloring (Gvozdenovi\'c, Laurent \cite{GvozdenovicLaurent1},
\cite{GvozdenovicLaurent2}, Gvozdenovi\'c \cite{Gvozdenovic}),
--- crossing numbers for complete binary graphs (de Klerk, Pasechnik,
Schrijver \cite{KlerkPasechnikSchrijver}), 
\item[---] travelling salesman problem (de Klerk, Pasechnik, Sotirov \cite{KlerkPasechnikSotirov}),
\item[---] coding theory (Schrijver \cite{Schrijver2}, Gijswijt,
  Schrijver, Tanaka \cite{GijswijtSchrijverTanaka}, de Klerk,
  Pasechnik \cite{KlerkPasechnik2}, de Klerk, Newman, Pasechnik,
  Sotirov \cite{KlerkNewmanPasechnikSotirov}, Laurent \cite{Laurent1},
  Bachoc \cite{Bachoc}, Vallentin \cite{Vallentin2}, Bachoc, Vallentin
  \cite{BachocVallentin4}),
\item[---] low distortion embeddings (Vallentin \cite{Vallentin1})
\item[---]
geometry (Bachoc,
Vallentin \cite{BachocVallentin1}, \cite{BachocVallentin2},
\cite{BachocVallentin3}, Bachoc, Nebe, de Oliveira Filho, Vallentin
\cite{BachocNebeOliveiraVallentin}).
\end{itemize}

\end{appendix}

\end{document}